\begin{document}

\oddsidemargin0.5truecm
\evensidemargin0.5truecm

\numberwithin{equation}{section}

\theoremstyle{plain}
\swapnumbers\newtheorem{lema}{Lemma}[section]
\newtheorem{prop}[lema]{Proposition}
\newtheorem{coro}[lema]{Corollary}
\newtheorem{teor}[lema]{Theorem}
\newtheorem{fundamental}[lema]{Main Lemma}

\theoremstyle{definition}
\newtheorem{defi}[lema]{Definition}
\newtheorem{defis}[lema]{Definitions}
\newtheorem{ejem}[lema]{Example}
\newtheorem{obse}[lema]{Remark}
\newtheorem{obses}[lema]{Remarks}
\newtheorem{nota}[lema]{Notation}
\newtheorem{sumando}[lema]{Determination of the summand $\kostkasf \la\la \times \kostkasf \mu\mu \times \lr\la\mu\nu$}
\newtheorem{colcon}[lema]{Column conditions}
\newtheorem{rowcon}[lema]{Row conditions}
\newtheorem{redus}[lema]{Redundant inequalities}

\renewcommand{\refname}{\large\bf References}
\renewcommand{\thefootnote}{\fnsymbol{footnote}}

\def\flecha{\longrightarrow}
\def\asocia{\longmapsto}
\def\sii{\Longleftrightarrow}
\def\implica{\Longrightarrow}
\def\implicarev{\Longleftarrow}

\def\vector#1#2{({#1}_1,\dots,{#1}_{#2})}
\def\palabra#1#2#3{{#1}_{#2}\cdots{#1}_{#3}}
\def\lista#1#2{{#1}_1, \dots, {#1}_{#2}}
\def\nume#1{\boldsymbol{[}\,{#1}\,\boldsymbol{]}}
\def\parti{\vdash}
\def\partis#1{{\mathscr{P}({#1})}}
\def\fun#1#2#3{{#1}\,\colon {#2} \flecha {#3}}
\def\suma#1#2#3{\sum_{{#1} = {#2}}^{#3}}
\def\negra#1{\boldsymbol{#1}}

\def\natural{{\mathbb N}}
\def\entero{{\mathbb Z}}
\def\racional{{\mathbb Q}}
\def\real{{\mathbb R}}
\def\complejo{{\mathbb C}}
\def\vacio{\varnothing}

\def\anipol#1#2{{#1}\hspace{.1em}\boldsymbol{[}\hspace{.06em}{#2}\hspace{.06em}\boldsymbol{]}}

\def\ol#1{\overline{#1}}
\def\wt#1{\widetilde{#1}}
\def\wh#1{\widehat{#1}}

\def\cara#1{\chi^{#1}}
\def\permu#1{\phi^{\,#1}}
\def\coefi{{\rm g}(\la,\mu,\nu)}
\def\coefiestable{\ol{\rm g}(\la,\mu,\nu)}
\def\coefili#1#2#3{{\rm g}({#1},{#2},{#3})}
\def\kron{\chi^\lambda\otimes\chi^\mu}
\def\kronli#1#2{\chi^{#1}\otimes\chi^{#2}}

\def\sime#1{S_{#1}}

\def\la{\lambda}
\def\longi#1{\ell({#1})}

\def\menore{\prec}
\def\menori{\preccurlyeq}
\def\mayore{\succ}
\def\mayori{\succcurlyeq}

\def\abc#1#2#3{{#1}_{{#2}{#3}}}
\def\columna#1{{\sf col}({#1})}
\def\contenido#1{{\rm cont}({#1})}
\def\forma#1{{\rm sh}({#1})}
\def\cano#1{C({#1})}
\def\insercol#1#2#3#4{{#1}_{#2} \rightarrow ( \cdots  \rightarrow
({#1}_{#3} \rightarrow {#4}) \cdots )}
\def\inserrow#1#2#3#4{( \cdots ( {#1} \leftarrow {#2}_{#3} ) \leftarrow \cdots )
\leftarrow {#2}_{#4}}
\def\kos#1#2{K_{{#1}{#2}}}
\def\kostkasf#1#2{{\rm ST}_{{#1}{#2}}}

\def\lr#1#2#3{{\rm LR}({#1},{#2};{#3})}
\def\lrest{{\rm LR}^*(\alpha,\beta;\nu)}
\def\lrlibre#1#2#3{{\rm LR}({#1},{#2};{#3})}

\def\transpuesta#1{{#1}\raisebox{.20ex}{$^{\text{\fontsize{7pt}{0pt}\selectfont {\sf T}}}$}}
\def\matriz{{\sf M}(\lambda,\mu)}
\def\numatriz{{\sf m}(\lambda,\mu)}
\def\matrizcon{{\sf M}_{\nu}(\lambda,\mu)}
\def\cardimatrizcon{{\sf m}_\nu(\lambda,\mu)}
\def\partipla{{\sf P}_\nu(\la,\mu)}
\def\matrizd{{\sf M}^*(\lambda,\mu)}
\def\numatrizd{{\sf m}^*(\lambda,\mu)}
\def\matriztres{{\rm M}(\lambda,\mu,\nu)}
\def\mtlibre#1#2#3{{\rm M}({#1}, {#2}, {#3})}
\def\matriztresreal{{\rm L}(\lambda,\mu,\nu)}
\def\mtrlibre#1#2#3{{\rm A}({#1}, {#2}, {#3})}
\def\transporte#1#2#3{{\rm T}({#1}, {#2}, {#3})}
\def\numatres{{\sf m}(\lambda,\mu,\nu)}
\def\matriztresest{{\sf M}^*(\lambda,\mu,\nu)}
\def\matriztresd{{\sf M}^*(\lambda,\mu,\nu)}
\def\numatresd{{\sf m}^*(\lambda,\mu,\nu)}
\def\matriztresdc{{\sf M}^*(\lambda,\mu,\nu^{\,\prime})}
\def\matcol#1{{#1}^{\sf col}}
\def\matrow#1{{#1}^{\sf row}}
\def\multimat#1{\bigl({#1}_{i,j}^{(k)}\bigr)}
\def\multient#1#2#3#4{{#1}_{{#2},{#3}}^{({#4})}}
\def\liri#1#2#3{{\rm lr}({#1},{#2};{#3})}
\def\lirid#1#2#3{{\sf lr}^*({#1},{#2};{#3})}
\def\transptres#1{{\rm A}(\la,\mu,{#1})}

\def\renglon#1{{\sf row}(#1)}
\def\sst{semistandard\ tableau\ }
\def\sucesion#1#2{{#1}_1 < {#1}_2 < \cdots < {#1}_{#2}}
\def\wcol#1{w_{\rm col}({#1})}
\def\insertar#1#2#3{{#1}_{#2} \rightarrow ( \cdots  \rightarrow
({#1}_1 \rightarrow {#3}) \cdots )}

\def\bili#1#2{\langle{#1}, {#2}\rangle}
\def\bilipunto{\bili{\,\cdot\,}{\cdot\,}}

\def\crpol{{\rm CR}(\la,\mu;\nu)}
\def\crpollibre#1{{\rm CR}(\la,\mu;{#1})}
\def\crpollibreestable#1{\ol{\rm CR}(\la,\mu;{#1})}
\def\crcone#1#2#3{{\rm CR}({#1},{#2};{#3})}
\def\posiuno#1#2{P_{#1}(\la,\mu;#2)}
\def\posidos#1#2#3#4{P_{#1,#2}^{(#3)}(\la,\mu;#4)}
\def\policol#1#2#3{C_{#1,#2}(\la,\mu;{#3})}
\def\polirow#1#2#3{R_{#1,#2}(\la,\mu;{#3})}
\def\colineq#1#2{{\rm C}(#1,#2)}
\def\rowineq#1#2{{\rm R}(#1,#2)}
\def\entrada#1#2#3{x_{#1,#2}^{(#3)}}
\def\nivel#1{X^{(#1)}}
\def\notforced{{\rm NF}(p,q,r)}

\begin{centering}
{\Large\bf  On the computation of Kronecker coefficients I: column-row polytopes}\\[.8cm]
{{\large Ernesto Vallejo}\footnotemark[1] \footnotemark[2]}\\
Universidad Nacional Aut\'onoma de M\'exico\\
Centro de Ciencias Matem\'aticas, Morelia, Mexico\\
{\tt vallejo@matmor.unam.mx}\\[.5cm]
{{\large Pedro David S\'anchez Salazar}}\\
Universidad Aut\'onoma de Yucat\'an\\
M\'erida, Mexico\\
{\tt pedro.sanchez@correo.uady.mx}\\
\end{centering}
\footnotetext[1]{Supported by UNAM-DGAPA grant IN108314}
\footnotetext[2]{Corresponding author}

\vskip 1.5pc
\begin{abstract}
We present a way of computing Kronecker coefficients
that uses a new family of rational convex polytopes, called column-row polytopes.
We give several different formulas for the computation.
They are alternating sums of numbers of integer points of either
column-row polytopes or faces of column-row polytopes.
We also compute the maximal dimension of these polytopes and give new
proofs of some known results of more theoretical nature.

\medskip
\emph{Key Words}: Kronecker coefficients, contingency tables,
representation theory of thesymmetric group
\end{abstract}

\tableofcontents

\section{Introduction}\hfill

Kronecker coefficients are one of the most intriguing and challenging families of
numbers in algebraic combinatorics.
They come from the representation theory of the symmetric group $S_n$ and include the celebrated
Littlewood-Richardson coefficients as particular cases.
A Kronecker coefficient $\coefi$ depends on three partitions $\la$, $\mu$ and $\nu$ of the
same integer $n$.
Denote by $\cara\la$ the irreducible character of the irreducible representation $S^\la$ of the
symmetric group $\sime n$ in characteristic zero, by $\otimes$ the product of characters
corresponding to the tensor product of representations,
and by $\bili \cdot \cdot$ the inner (scalar) product of complex characters.
The  \emph{\textbf{Kronecker coefficient}} $\coefi$ is defined as the multiplicity
of $\cara\nu$ in the product of characters $\cara\la \otimes \cara\mu$, that is,
\begin{equation*}
\coefi = \bili {\cara\la \otimes \cara\mu}{\cara\nu}.
\end{equation*}

The aim of these paper is to give a way for computing every Kronecker coefficient
$\coefi$ by counting integer points in a new family of convex polytopes depending on the partitions.
We also give new proofs of some known results using as a tool these polytopes.
This shows that their usefulness is not only computational.
Our approach is based on a method due to Robinson and Taulbee~\cite{rt54}, which
expresses a Kronecker coefficient as an integral linear combination of numbers obtained from
iterated application of the Littlewood-Richardson rule.
Their method goes as follows: Let $\permu\gamma$ denote the character of the permutation
representation $\anipol \complejo {\sime n/ \sime \gamma}$, where
$\sime \gamma$ denotes the Young subgroup of $\sime n$ associated to $\gamma$.
Then
\begin{equation} \label{ecua:robinson-taulbee}
\coefi = \sum_{\gamma \mayori \nu} K_{\gamma\nu}^{(-1)}\, \bili {\cara\la \otimes \cara\mu} {\permu \gamma},
\end{equation}
where the sum runs over the set of partitions $\gamma$ of $n$ that are greater than or equal to $\nu$
in the dominance order and the $K_{\gamma\nu}^{(-1)}$'s are entries in the inverse of the Kostka matrix
for $S_n$.
The number $\bili {\cara\la \otimes \cara\mu} {\permu \gamma}$ can be computed combinatorially
by means of Frobenius reciprocity and the Littlewood-Richardson rule
(for details see Section~2 in~\cite{va99}).
I.~F.~Donin gave a similar combinatorial description of $\bili {\cara\la \otimes \cara\mu} {\permu \gamma}$
in~\cite{don89}.
In Theorem~\ref{teor:cr-poli} we prove
\begin{equation} \label{ecua:liri se cuenta con puntos enteros}
\bili {\cara\la \otimes \cara\mu} {\permu \gamma} = \#  \crpollibre \gamma,
\end{equation}
where $\crpollibre \gamma$ is a rational convex polytope to be defined in
Section~\ref{sec:cr polytopes}  and $\# P$ denotes the number of integer points
in a convex polytope $P$.
We call $\crpollibre \gamma$ a \emph{column-row polytope}.
It is a subset of a $3$-way transportation polytope (see~\cite{delokim} for the definition).
Its integer points are 3-dimensional matrices with nonnegative integer entries
(also called 3-way statistical tables or tensors of order $3$) whose $1$-marginals
(or plane sums) are $\la$, $\mu$ and $\gamma$.
The inequalities describing these polytopes are new, but dual (in the sense of the
Robinson-Schensted-Knuth correspondence) to the inequalities that appeared in~\cite{va03}.
One should \emph{stress} that these inequalities are not obtained from a combination of inequalities
defining Littlewood-Richardson coefficients; they are essentially different.
Combining~\eqref{ecua:robinson-taulbee} and~\eqref{ecua:liri se cuenta con puntos enteros}
we get our first main result, Theorem~\ref{teor:kron-alternante}, which states that
\begin{equation} \label{ecua:rotapo1}
\coefi = \sum_{\gamma \mayori \nu} K_{\gamma\nu}^{(-1)}\, \# \crpollibre \gamma.
\end{equation}
This sum can also be computed by means of the Jacobi-Trudi determinant that expresses
the irreducible character $\cara\nu$ as a linear combination of the permutation
characters $\phi^\gamma$.
The right-hand side of~\eqref{ecua:rotapo1} can be then computed via Barvinok's algorithm~\cite{ba08, bp99}.
If $\la$, $\mu$ and $\gamma$ have lengths $p$, $q$ and $r$, respectively,
we show in Theorem~\ref{teor:dim-crpolytope} that the dimension of $\crpollibre \gamma$ is at most
\begin{equation*}
\textstyle
pqr - \binom{p}{2} - \binom {q}{2} - (p+q+r) + 2.
\end{equation*}
This is our second main result.

Current applications of Kronecker coefficients in quantum information and in geometric
complexity theory require more efficient algorithms.
There are other algorithms that exploit the power of present day computers.
M.~Christandl, B.~Doran and M.~Walter~\cite[Prop.~VI.1]{cdw12} use the theory of Lie groups and algebras
to express Kronecker coefficients as an integral linear combination of numbers of integral points
in $3$-way transportation polytopes.
V.~Baldoni and M.~Vergne~\cite{bavew18} use, besides Lie groups and Lie algebra theory,
methods from symplectic geometry and residue calculus to compute individual Kronecker coefficients
as well as symbolic formulas that are valid on an entire polyhedral chamber.
M.~Mishna, M.~Rosas, S.~Sundaram and S.~Trantafir use the theory of symmetric
functions to express Kronecker coefficients as a integral linear combination of vector partition
functions (see~\cite{mrs} and \cite[Thm.~1.1]{mitr}).

Our method is of a very different nature than the previous ones.
It is pretty basic in the sense that it only requires identity~\eqref{ecua:kron-rt},
some results on the combinatorics of Young tableaux contained in~\cite{ful} and a bijection
between $3$-dimensional matrices and certain triples of tableaux given in~\cite{av12}.
The column-row polytopes appearing in equation~\eqref{ecua:rotapo1} are defined
by explicit equalities (see Propositions~\ref{prop:column-conditions} and~\ref{prop:row-conditions}).
The combinatorial proof of the stability of Kronecker coefficients given in~\cite{va99}
and based in equation~\eqref{ecua:kron-rt} can be translated to a proof of the stability
using equation~\eqref{ecua:rotapo1}.
With an adequate parametrization of each column-row polytope one can give better bounds
for the stability of a specific Kronecker coefficient and, more importantly, one obtains
a formula for the reduced Kronecker coefficients of the form
\begin{equation*}
\coefiestable = \sum_{\gamma \mayori \nu} K_{\gamma\nu}^{(-1)}\, \# \crpollibreestable \gamma,
\end{equation*}
where each $\crpollibreestable \gamma$ is defined by a subset of the inequalities
defining $\crpollibre \gamma$.
This is carried out in~\cite{biva}.

In~\cite[Example~3.20]{bavew18} V. Baldoni and M. Vergne have shown that, for partitions
$\la$, $\mu$ and $\nu$ with $p$, $q$ and $r$ parts, respectively, the coefficient
$\coefili {t \la}{t \mu}{t \nu}$ is a quasi-polynomial function of $t$ of degree at most
\begin{equation} \label{ecua:dimension balver}
\textstyle
pqr - \binom p2 - \binom q2 - \binom r2 - (p+q+r) + 2.
\end{equation}
Thus, the dimension of the column-row polytope given in Theorem~\ref{teor:dim-crpolytope}
exceeds by $\binom r2$ the degree of the corresponding quasi-polynomial function.
One can in fact adjust formula~\eqref{ecua:rotapo1} and produce another that involves only some
faces of the column-row polytopes.
This is the content of Theorem~\ref{teor:kron-eficiente}, our third main result.
Since Barvinok's algorithm runs in $L^{O(d \log d)}$ time for a polytope of  dimension $d$
whose input has size $L$ (see~\cite[Thm.~2.5]{papa17}), this adjustment reduces both the dimension
and the input size of the polytopes involved in the computation of a Kronecker coefficient,
hence the time needed to compute it.
With the existing algorithms one can compute Kronecker coefficients on a small computer
in a span of time that goes from a few seconds to a couple of hours only
for $r=2$ and $3$ and small values of $p$ and $q$ (see~\cite{bavew18, biva, mrs, mitr}).
In the case $r=2$ the formula in Theorem~\ref{teor:kron-eficiente} permits one to compute
Kronecker coefficients by counting integer points in polytopes whose dimension does not
exceed~\eqref{ecua:dimension balver}.
We expect to obtain an analogous formula for the case $r=3$.
It should be underlined that there is something mysterious in the nature of column-row polytopes
in that one can compute the same Kronecker coefficient using different faces of the same
polytopes depending on the value of $\ell$ in Theorem~\ref{teor:kron-eficiente}.

The paper is organized in the following way:
In Section~\ref{sec:coef-de-kron} we review the notions related to Young tableaux
and the representation theory of the symmetric group.
In Section~\ref{sec:insertion of words} we present the Main Lemma concerning
insertions of words in Young tableaux which will yield the inequalities defining
the column-row polytopes.
Its proof is deferred to Section~\ref{sec:proofs}.
The other ingredient in the definition of column-row polytopes is a one-to-one
correspondence between 3-dimensional matrices and certain triples of tableaux
appearing in~\cite{av12}.
This is explained in Section~\ref{sec:correspondencia}.
We define our main tool for computing Kronecker coefficients, namely column-row polytopes,
in Section~\ref{sec:cr polytopes}.
Here, we express any Kronecker coefficient as an integral linear combination of numbers
of integer points in these polytopes (Theorem~\ref{teor:kron-alternante}).
We also present some properties of column-row polytopes as well as new proofs of some
known results.
These polytopes are intersections of hyperplanes depending on $\la$, $\mu$ and $\nu$ with a convex cone
depending only on the lengths of $\la$, $\mu$ and $\nu$ (called the \emph{column-row cone}).
We determine the dimensions of column-row cones and column-row polytopes in Sections~\ref{sec:cr-cones}
and~\ref{sec:dim column-row polytope}, respectively.
Finally, in Section~\ref{sec:column and row ineqs} we give an alternative
description of some of the inequalities defining the column-row cone with the aim of
giving another more efficient way, in Section~\ref{sec:red of dim}, of computing Kronecker
coefficients by counting integer points in some faces of the column-row polytopes.
This is done in Theorem~\ref{teor:kron-eficiente}.

\section{Kronecker coefficients} \label{sec:coef-de-kron} \hfill

In this paper we use the following notation:
$\natural$ is the set of positive integers.
For any $m$, $n\in \natural \cup \{0\}$,  $\nume n = \{1, \dots, n\}$ and $\nume {m,n}$
is the interval of integers $k$ such that $m \le k \le n$.
A \emph{\textbf{partition}} $\la = \vector \la p$ is a weakly decreasing sequence
$\la_1 \ge \la_2 \ge \cdots \ge \la_p$ of nonnegative integers.
Its \emph{\textbf{size}} is $|\la | = \la_1 + \cdots + \la_p$ and
its \emph{\textbf{length}} $\longi{\la}$ is the number of positive parts.
If $|\la|=n$, we also write $\la\vdash n$ and say that $\la$ is a partition of $n$.
A \emph{\textbf{composition}} of $n$ is a sequence of positive integers $\gamma = \vector \gamma k$
such that $\sum_{i=1}^k \gamma_i = n$.
Let $\mu$ be a partition whose diagram is contained in the diagram of $\la$;
we write $\la/\mu$ for the \emph{\textbf{skew partition}} formed by the boxes in
$\la$ that are not in $\mu$.
A \emph{\textbf{semistandard tableau}} $T$ is a filling of a skew diagram of $\la/\mu$ with
numbers from $\natural$ that is weakly increasing in rows from left to right and
strictly increasing in columns from top to bottom (in English notation).
Its \emph{\textbf{shape}}, denoted $\forma T$, is $\la/\mu$ and its
\emph{\textbf{content}}, denoted $\contenido T$, is the composition $\vector \gamma n$,
where $\gamma_i$ is the number of $i$'s in $T$.
The \emph{\textbf{word}} (or \emph{\textbf{row word}}) of a skew tableau $T$, $w_{\rm row}(T)$,
is defined by reading the entries of $T$ from left to right and from bottom to top.
A word $\palabra w1n$ is called a \emph{\textbf{reverse lattice word}} if when read
backwards from the end to any letter, the sequence $\palabra wnk$ contains at least
as many $i$'s as it does $i+1$'s, for any $i \in \natural$.
A \emph{\textbf{Littlewood-Richardson skew tableau}} is a semistandard tableau $T$
whose word $w_{\rm row}(T)$ is a reverse lattice word.
For more details on the combinatorics of Young tableaux and the representation theory of the symmetric group
we refer the reader to~\cite{ful, m95, sa01, st99}.

Kronecker coefficients generalize Littlewood-Richardson coefficients for which there
are various combinatorial interpretations.
The first one of them goes back to 1934 (see~\cite{liri34}).
Ideally, one would also like a combinatorial interpretation for Kronecker coefficients
(see Problem~10 in~\cite{stan00}).
Recently, doubts have been cast on the existence of such a interpretation (\cite{ikenpak, pak-intp-2022}).
If indeed, a combinatorial interpretation for Kronecker coefficients does not exist
the need of other methods of computing them, like the one we are presenting, would be even more
necessary.
The quest for efficient ways of computing Kronecker coefficients has been an open
problem since the appearance of the first paper on the subject more than 85 years ago~\cite{mu38}.
It is well-known~\cite{don89, rt54} that Kronecker coefficients
can be obtained as an alternating sum of numbers computed combinatorially.
A way of doing this is the following:

Let $\tau = \vector \tau r$ be a composition of $n$.
A sequence $T = \vector Tr$ of tableaux is called a \emph{\textbf{Littlewood-Richardson multitableau}}
of \emph{\textbf{shape}} $\la$ and \emph{\textbf{type}} $\tau$ if there exists a
sequence of partitions $\la(0)$, $\la(1),  \dots ,\la(r)$ such that
\begin{equation*}
\vacio = \la(0) \subseteq \la(1) \subseteq \cdots
\subseteq \la(r) = \la
\end{equation*}
and $T_i$ is a Littlewood-Richardson tableau of shape $\la(i)/\la(i -1)$ and size $\tau_i$
(the number of squares of $T_i$) for all $i\in \nume r$.
If each $T_i$ has content $\gamma(i)$, then we say that $T$ has \emph{\textbf{content}}
$(\gamma(1),\dots, \gamma(r))$.
Since $T_i$ is a Littlewood-Richardson tableau, $\gamma(i)$
is a partition of $\tau_i$.
We denote by $\lr \la\mu\tau$ the set of all pairs $(T,S)$ of Littlewood-Richardson
multitableaux such that $T$ has shape $\la$, $S$ has shape $\mu$ and both have the same
content $\tau$.
The number of such pairs is denoted by
\begin{equation} \label{ecua:liri tableau}
\liri \la\mu\tau = \# \lr \la\mu\tau.
\end{equation}

Let $\permu\tau$ denote the permutation character associated to $\tau$.
It is well known (see for example Section~2 in~\cite{va99} or Lemma 4.1 in~\cite{va09}) that
\begin{equation} \label{ecua:liri caracter}
\liri \la\mu\tau = \langle \cara\la \otimes \cara\mu, \permu\tau \rangle .
\end{equation}
Note that for any reordering $\sigma$ of $\tau$, since $\permu\sigma = \permu\tau$, one has
\begin{equation} \label{ecua:liri number}
\liri \la\mu\sigma = \liri \la\mu\tau.
\end{equation}

Those readers more familiar with symmetric functions can translate, via the Frobenius
characteristic map (\cite[Section~I.7]{m95} or~\cite[Section~7.18]{st99}),
the identities involving characters of the symmetric group into identities
involving symmetric functions.
Thus, if $s_\la$ denotes a Schur function, $h_\la$ a complete symmetric function, $*$
the internal product of symmetric functions and $\bilipunto$ the scalar product of
symmetric functions, one has
\begin{equation*}
\coefi = \bili {s_\la * s_\mu}{s_\nu}
\end{equation*}
and
\begin{equation*}
\liri \la\mu\tau = \bili {s_\la * s_\mu}{h_\tau}.
\end{equation*}

The \emph{\textbf{Kostka number}} $\kos\la\tau$ is the number of semistandard tableaux
of shape $\la$ and content $\tau$.
Let us arrange the partitions of $n$ in reverse lexicographic order
and form the matrix $K_n =(\kos\la\mu )$.
It is invertible over the integers.
The entries of the inverse matrix $K_n^{-1}= \Bigl( \kos\la\mu^{(-1)} \Bigr)$ can be computed
either from Jacobi-Trudi determinants or combinatorially~\cite{er90, m95}.
Thus every irreducible character can be expressed as in integral linear combination
of permutation characters.
Then, from the bilinearity of the inner product of characters, we get
a combinatorial description of Kronecker coefficients:
\begin{equation} \label{ecua:kron-rt}
\coefi = \sum_{\gamma \mayori \nu} \kos\la\gamma^{(-1)}\, \liri \la\mu\gamma.
\end{equation}
Here, $\mayori$ denotes the dominance order of partitions.
The right side of the equation involves, in general, positive as well as negative terms.
This method for computing Kronecker coefficients goes back to Robinson and Taulbee~\cite{rt54}.

The numbers $\liri \la\mu\gamma$ have already been used to obtain results about Kronecker coefficients
(see, for example~\cite{av12, va99, va00, va09, va14}).
We will show in Section~\ref{sec:cr polytopes} that $\liri \la\mu\gamma$ equals the number of
integer points in a column-row polytope.
Hence, one can use Barvinok's algorithm (\cite{ba08, bp99}) to compute Kronecker coefficients.

\section{Insertion of words} \label{sec:insertion of words} \hfill

Column-row polytopes are subsets of $3$-way transportation polytopes
defined by a set of equalities and inequalities obtained in the Main Lemma~\ref{lema:cano-cano}.
The proofs of the next two lemmas are given in~Section~\ref{sec:proofs}.

For any partition $\la$, the \emph{\textbf{canonical tableau}} $\cano \la$ of shape $\la$
is the unique semistandard tableau of shape and content $\la$.
Given a semistandard tableau $T$, let $\wcol T$ denote the \emph{\textbf{column  word}}
of $T$, that is, the word obtained from $T$ by reading its entries from bottom to top
(in English notation), in successive columns, starting in the left column and moving to
the right (see~\cite[p.~27]{ful}).
For example $\wcol{\cano{3,2,1}} = 321211$.

Let $w=w_1 \cdots w_l$ be a word in the alphabet $\natural$.
We denote by $P(w)$ the tableau obtained by successively column-inserting the
letters of $w$ in the empty tableau $\vacio$:
\begin{equation*}
P(w)= \insercol w1l \vacio.
\end{equation*}
Note that $P(w)$ can also be obtained by row insertion:
\begin{equation*}
P(w) = \inserrow \vacio w1l.
\end{equation*}
There is a neat description of $w$ when $P(w)$ is a canonical tableau.

\begin{lema} \label{lema:cano-palabra}
Let $w$ be a word of content $\la$.
Then $P(w)= \cano\la$ if and only if $w$ is a reverse lattice word.
\end{lema}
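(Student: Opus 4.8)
The plan is to deduce the lemma from the theory of the plactic (Knuth) monoid, relying on three facts. The first is standard and may be taken from~\cite{ful}: two words have the same insertion tableau if and only if they are Knuth equivalent, and $P(w_{\rm row}(T)) = T$ for every semistandard tableau $T$. In particular, $P(w) = \cano\la$ holds if and only if $w$ is Knuth equivalent to the row word $w_{\rm row}(\cano\la)$. The second fact, which I regard as the crux, is that the property of being a reverse lattice word is invariant under the elementary Knuth transformations. The third fact is that a semistandard tableau of partition shape whose row word is a reverse lattice word and whose content is $\la$ must coincide with $\cano\la$.

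Granting these, both implications are short. Observe first that $w_{\rm row}(\cano\la)$ is a reverse lattice word: reading it backwards yields $1^{\la_1}2^{\la_2}\cdots$, so every prefix of the reversed word contains at least as many $i$'s as $(i+1)$'s because $\la$ is weakly decreasing. For the direction $P(w) = \cano\la \implica w$ reverse lattice, the first fact gives $w \equiv_K w_{\rm row}(\cano\la)$, and the second fact propagates the reverse lattice property from $w_{\rm row}(\cano\la)$ to $w$ along the finite chain of Knuth transformations witnessing this equivalence. Conversely, if $w$ is a reverse lattice word, set $T = P(w)$; this is a semistandard tableau of partition shape with $\contenido T = \contenido w = \la$. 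By the first fact $w \equiv_K w_{\rm row}(T)$, so by the second fact $w_{\rm row}(T)$ is again a reverse lattice word, and then the third fact forces $T = \cano\la$.

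The third fact I would prove by peeling off rows from the top: column-strictness forces every entry of row $i$ to be at least $i$, while the reverse lattice condition applied to the one-letter suffix given by the rightmost entry of the top row forces that entry, hence all of the top row, to equal its row index; removing that row leaves a partition-shape tableau on the alphabet $\{2,3,\dots\}$ whose row word is still reverse lattice, and induction finishes the argument. The real work lies in the second fact. Since an elementary Knuth transformation only permutes a window of three consecutive letters, it leaves unchanged every suffix beginning outside the window and every suffix beginning before it (which sees the whole window as an unordered multiset); only the suffixes beginning at the interior positions of the window can change. The plan is to verify the defining inequalities for these suffixes directly for each of the two Knuth relations, using their hypotheses $x \le y < z$ and $x < y \le z$. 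The main obstacle is precisely this case analysis: one must check that inserting or deleting a single letter $x$ at the front of an already-good suffix cannot break the inequality $\#(x-1) \ge \#x$, which works out because the goodness of the neighbouring suffix in the unmodified word supplies exactly the needed margin. Once this local invariance is established, the global statement follows, completing the proof.
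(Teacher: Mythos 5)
Your proof is correct, and for the harder implication it is essentially the paper's argument: to show that $P(w)=\cano\la$ forces $w$ to be a reverse lattice word, the paper also picks a reverse lattice word whose insertion tableau is $\cano\la$ (it uses the column word $\wcol{\cano\la}$ where you use the row word), concludes that $w$ is Knuth equivalent to it, and then invokes the invariance of the reverse-lattice property under Knuth equivalence---your ``second fact''---except that the paper simply cites this as Lemma~2, p.~66 of Fulton's book instead of re-proving it. Your sketch of that invariance is sound: only suffixes beginning at the interior positions of the three-letter window change, and in the delicate subcases (e.g.\ $x=z-1$) the required inequality does come from a neighbouring good suffix, either the one containing the whole window or the one starting just after it. Where you genuinely differ is the converse implication. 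The paper proves that a reverse lattice word inserts to $\cano\la$ directly: column-inserting the letters of $w$ starting from the last, one checks inductively that after each step the tableau is the canonical tableau of the content of the current suffix, the reverse-lattice hypothesis guaranteeing that this content is always a partition. You instead run the plactic machinery a second time: $w$ is Knuth equivalent to $w_{\rm row}(P(w))$, invariance transfers the reverse-lattice property to that row word, and your third fact (a semistandard tableau of partition shape with reverse-lattice row word and content $\la$ must be $\cano\la$, proved by peeling off the top row) pins down $P(w)=\cano\la$. Both routes work: the paper's is shorter because it can cite Fulton and because the direct insertion argument needs no analogue of your third fact; yours is more self-contained and more symmetric, deriving both implications from the same two plactic facts, at the price of carrying out the Knuth-move case analysis and the row-peeling induction in full.
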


The following lemma and its corollary will be central in our method for computing
Kronecker coefficients through the counting of integer points in polytopes.
If $(P,Q)$ is the pair of semistandard tableaux associated to a matrix $B$ under the
RSK-correspondence, we characterize with linear inequalities when $P$ or $Q$
are canonical tableaux.
The reverse lattice word condition of Lemma~\ref{lema:cano-palabra} can be translated
into a set of inequalities in a similar way as it was done in~\cite{pv05, va03}.

\begin{fundamental} \label{lema:cano-cano}
Let $\la$ and $\mu$ be partitions of the same size and of lengths $p$ and $q$, respectively.
Let $m = \min \{ p, q \}$ and $B = (b_{i,j})$ be a matrix with nonnegative integer entries,
row-sum vector $\la$ and column-sum vector $\mu$.
If $(P,Q)$ is the pair of semistandard tableaux associated to $B$ under the RSK-correspondence, then

(1) $P = \cano \mu$ if and only if

\hspace{15 pt} (i) $b_{i,j} = 0$ for $i+j > p+1$; and

\hspace{15 pt} (ii) $\sum_{k=i}^{p+1-j} b_{k,j} \ge \sum_{k=i-1}^{p-j} b_{k,j+1}$ for $j \in \nume {m-1}$
and $i \in \{ 2, \dots, p+1-j \}$.

(2) $Q = \cano \la$ if and only if

\hspace{15 pt} (i) $b_{i,j} = 0$ for $i+j > q+1$; and

\hspace{15 pt} (ii) $\sum_{l=j}^{q+1-i} b_{i,l} \ge \sum_{l=j-1}^{q-i} b_{i+1,l}$ for
$i \in \nume {m-1}$ and $j \in \{ 2 ,\dots, q+1-i \}$.
\end{fundamental}

\begin{coro} \label{coro:cano-cano}
Let $\la$ and $\mu$ be partitions of the same size and of lengths $p$ and $q$,
respectively.
Let $m = \min \{ p, q \}$ and $B = (b_{i,j})$ be a matrix with nonnegative integer entries,
row-sum vector $\la$ and column-sum vector $\mu$.
Then $( \cano\mu, \cano\la)$ is the pair of semistandard tableaux associated to $B$ under the
RSK correspondence if and only if

(i) $b_{i,j} = 0$ for $i+j > m+1$; and

(ii) for each $k \in \nume m$ the entries $b_{i,j}$ in the diagonal $i+j = k+1$
are all equal.

\noindent
In other words the first $m$ diagonals of $B$ are constant (and different from zero) and the remaining
entries are all zero.
\end{coro}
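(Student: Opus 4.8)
The plan is to read the Corollary off the Main Lemma~\ref{lema:cano-cano} directly, since under RSK the pair associated to $B$ is $(\cano\mu,\cano\la)$ precisely when $P=\cano\mu$ \emph{and} $Q=\cano\la$. I would invoke both parts of the Main Lemma at once and show that the conjunction of (1i), (1ii), (2i) and (2ii) is equivalent to conditions (i) and (ii) of the Corollary. The support statement is immediate: conditions (1i) and (2i) force $b_{i,j}=0$ for $i+j>p+1$ and for $i+j>q+1$ respectively, so, as $m=\min\{p,q\}$, their conjunction is exactly $b_{i,j}=0$ for $i+j>m+1$, which is condition (i). It then remains to prove that, \emph{assuming} (i), the inequalities (1ii) and (2ii) hold simultaneously if and only if $B$ is constant along each diagonal $i+j=k+1$, $k\in\nume m$.

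The easy direction is a direct computation. Writing (1ii) as $S(i,j)\ge S(i-1,j+1)$ with $S(i,j)=\sum_{k=i}^{p+1-j}b_{k,j}$, one checks that if the diagonals are constant then both $S(i,j)$ and $S(i-1,j+1)$ equal the sum of the diagonal values over the diagonals indexed $i+j-1,\dots,m$, so (1ii) holds with equality; (2ii) is identical with row partial sums in place of column ones. (As a byproduct the constant values $d_k$ on the diagonals satisfy $\la_i=\mu_i=\sum_{k\ge i}d_k$, which is why $P$ and $Q$ can share a shape; this explains the informal restatement at the end.)

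For the converse the key step is to rewrite both inequalities as antidiagonal telescoping sums, using (i) to truncate every sum at the boundary diagonal $i+j=m+1$. Inequality (1ii) becomes
\begin{equation*}
\sum_{d=i+j-1}^{m}\bigl(b_{d+1-j,\,j}-b_{d-j,\,j+1}\bigr)\ge 0,
\end{equation*}
where for each $d$ the entries $b_{d+1-j,j}$ and $b_{d-j,j+1}$ are two neighbouring entries on the diagonal $i+j=d+1$; symmetrically (2ii) becomes $\sum_{d=i+j-1}^{m}\bigl(b_{i,\,d+1-i}-b_{i+1,\,d-i}\bigr)\ge0$, comparing neighbours on the same diagonals but in the transverse direction. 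I would then argue by downward induction on the diagonal index $d_0$ from $m$ down to $2$ (the diagonal $i+j=2$ is a single entry, hence trivially constant). Assuming the diagonals indexed $d_0+1,\dots,m$ are already constant, all their contributions to both telescoped sums cancel, since on a constant diagonal the two neighbouring entries agree; only the $d=d_0$ term survives. Letting $i,j$ run over the pairs with $i+j-1=d_0$ then yields from (1ii) the chain $b_{d_0,1}\ge b_{d_0-1,2}\ge\cdots\ge b_{1,d_0}$ along the diagonal, and from (2ii) the opposite chain $b_{1,d_0}\ge b_{2,d_0-1}\ge\cdots\ge b_{d_0,1}$; together these force all entries on that diagonal to be equal, which closes the induction and establishes (ii).

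I expect the inner diagonals to be the main obstacle. For the boundary diagonal $d_0=m$ the inequalities are already single-term comparisons of neighbours, but for $d_0<m$ the partial sums genuinely mix entries from several diagonals, so no single inequality isolates a neighbouring pair on the diagonal $i+j=d_0+1$. The telescoping rewriting together with the downward induction is exactly what dissolves this: the constancy of the higher diagonals makes every term except the lowest one cancel. The remaining bookkeeping, namely that the index choices $j\in\{1,\dots,d_0-1\}$ and $i\in\{1,\dots,d_0-1\}$ lie within the ranges permitted in (1ii) and (2ii) and that all positions named lie in the support, is routine because $d_0\le m\le\min\{p,q\}$.
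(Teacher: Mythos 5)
Your proposal is correct and follows essentially the same route as the paper's own proof: reduce to the Main Lemma, note that (1i) and (2i) together give condition (i), and then prove (ii) by a downward induction over the diagonals from $i+j=m+1$ inward, where constancy of the outer diagonals cancels all but the lowest terms of the sums in (1ii) and (2ii), producing the two opposite chains $b_{d_0,1}\ge\cdots\ge b_{1,d_0}$ and $b_{1,d_0}\ge\cdots\ge b_{d_0,1}$ that force equality. The only cosmetic differences are that the paper first assumes $p\le q$ by RSK symmetry while you truncate all sums at diagonal $m$ directly via (i), and that your explicit telescoping formula spells out the cancellation the paper merely describes in words.
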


\section{Matrices and tableaux}\label{sec:correspondencia}\hfill

The second ingredient we need to define column-row polytopes is Theorem~\ref{teor:rsk-3},
which is explained in this section.

Let $\la = \vector \la p$ and $\mu = \vector \mu q$ be partitions of $n$ and let $\tau = \vector \tau r$
be a composition of $n$.
Denote by $\mtrlibre \la \mu \tau$ the affine space of all $3$-dimensional matrices
with real entries having 1-marginals (plane sums)
$\la$, $\mu$ and $\tau$, namely, matrices $A=(a_{ijk})$ such that their entries satisfy
\begin{equation*}
\sum_{y,z} a_{iyz}=\la_i, \quad \sum_{x,z} a_{xjz}=\mu_j \quad
\textrm{and} \quad  \sum_{x,y}    a_{xyk}=\tau_k ,
\end{equation*}
for all $i\in \nume p$, $j \in \nume q$  and $k \in \nume r$.
Let $\transporte \la\mu\tau$ be the set of matrices in $\mtrlibre \la \mu \tau$ with
nonnegative entries.
It is a convex polytope called $3$-way transportation polytope (see~\cite{delokim}).
Let $\mtlibre \la \mu \tau$ denote the set of matrices in $\transporte \la\mu\tau$ with
integer entries.
Its elements are also called $3$-way tables or tensors of order $3$.
Note that we choose to work with $\la$ and $\mu$ as partitions and with $\tau$ as a composition
because we want to describe the elements in the set $\lr \la\mu\tau$ from Section~\ref{sec:coef-de-kron}
by means of matrices in $\mtlibre \la \mu \tau$.
The invariance under permutation of coordinates of $\tau$ shown in~\eqref{ecua:liri number}
yields a great flexibility when applying formula~\eqref{ecua:rotapo1}.
For any partition $\alpha$ of $n$ let $\kostkasf \alpha\la$ denote
the set of all semistandard tableaux of shape $\alpha$ and content $\la$.
The following theorem was proved in~\cite{av12}:

\begin{teor} \label{teor:rsk-3}
There is a one-to-one correspondence between the set $\mtlibre \la \mu \tau$
and the disjoint union of triples
$\bigsqcup_{\,\alpha,\,\beta\vdash n} \kostkasf \alpha\la\times\kostkasf\beta\mu
\times\lr \alpha\beta\tau$.
\end{teor}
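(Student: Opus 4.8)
The plan is to construct the bijection in Theorem~\ref{teor:rsk-3} by composing the classical RSK-type correspondences in a way that keeps track of the three marginals. A $3$-dimensional matrix $A=(a_{ijk})$ in $\mtlibre \la \mu \tau$ can be reorganized by fixing its third index $k$: for each $k \in \nume r$ the slice $A^{(k)}=(a_{ijk})$ is an ordinary nonnegative integer matrix. This gives a decomposition of $A$ into a sequence of $r$ two-dimensional matrices whose entrywise sum $\sum_k A^{(k)}$ has row sums $\la$ and column sums $\mu$. The slicing is where the composition $\tau$ enters: the total sum of the entries in slice $A^{(k)}$ is exactly $\tau_k$, since $\sum_{i,j} a_{ijk} = \tau_k$ by the $3$rd $1$-marginal condition.

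First I would apply the RSK correspondence to the aggregate matrix $\sum_k A^{(k)}=(b_{ij})$, where $b_{ij}=\sum_k a_{ijk}$. Since $\sum_j b_{ij}=\la_i$ and $\sum_i b_{ij}=\mu_j$, RSK produces a pair $(P,Q)$ of semistandard tableaux of a common shape, say $\alpha \vdash n$, with $\contenido P = \la$ and $\contenido Q = \mu$ after the usual transpose-of-content bookkeeping; thus $P \in \kostkasf \alpha\la$ and $Q \in \kostkasf \alpha\mu$. This explains the appearance of the two Kostka-type factors $\kostkasf \alpha\la \times \kostkasf \beta\mu$ in the statement. The remaining data --- how the total weight in each cell is distributed among the $r$ slices --- is precisely the information that, when fed through the insertion process slice by slice, records a pair of Littlewood-Richardson multitableaux of type $\tau$ and the same pair of shapes $(\alpha,\beta)$, landing in the set $\lr \alpha\beta\tau$.

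The key steps, in order, are: (i) formalize the slicing map $A \mapsto (A^{(1)}, \dots, A^{(r)})$ and check that the marginal conditions translate into the sizes $\tau_k$ and the aggregate marginals $\la, \mu$; (ii) run RSK on the aggregate matrix to extract the shapes $\alpha$ and $\beta$ (one from each recording/insertion side) together with the single tableaux in $\kostkasf\alpha\la$ and $\kostkasf\beta\mu$; (iii) show that the order-of-insertion data induced by reading the slices successively assembles into a Littlewood-Richardson multitableau on each side, giving an element of $\lr\alpha\beta\tau$; and (iv) exhibit the inverse, reconstructing $A$ from the triple by reversing each insertion step. Because RSK and its multitableau refinement are bijective, each forward step should be invertible, and the whole composition is then a bijection.

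The hard part will be step (iii): verifying that the incremental insertions coming from the successive slices really produce \emph{Littlewood-Richardson} multitableaux --- that is, that the partial shapes form a valid chain $\vacio = \la(0) \subseteq \cdots \subseteq \la(r) = \alpha$ with each skew piece filled by an LR tableau of the correct content and size $\tau_k$ --- rather than merely a sequence of semistandard fillings. This is where the reverse lattice word condition must be matched against the growth of the RSK shapes under successive insertion, and it is the place where invariance under reordering of $\tau$, recorded in~\eqref{ecua:liri number}, becomes important for consistency. Since this theorem is quoted from~\cite{av12}, I would lean on the insertion combinatorics of~\cite{ful} to control how the recording tableau grows and to certify the lattice property, treating (i), (ii), and (iv) as essentially bookkeeping once the marginal translations are pinned down.
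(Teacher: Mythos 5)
Your step (ii) contains a structural error that no amount of work in steps (iii)--(iv) can repair. When RSK is applied to a single matrix --- your aggregate $B=\sum_k A^{(k)}$ --- the insertion and recording tableaux \emph{always have the same shape}, so every triple your construction could output has its two Kostka factors of equal shape. But the theorem's disjoint union runs over \emph{independent} pairs $\alpha,\beta\vdash n$, and the mixed components are nonempty and must be hit by any bijection. Concretely, take $n=2$ and $\la=\mu=\tau=(1,1)$. Then $\mtlibre \la\mu\tau$ has exactly four elements, and the right-hand side consists of four singleton components, one for each pair $(\alpha,\beta)$ of partitions of $2$; in particular the component $\kostkasf \alpha\la\times\kostkasf \beta\mu\times\lr \alpha\beta\tau$ with $\alpha=(2)$, $\beta=(1,1)$ is nonempty, since $K_{(2),(1,1)}=K_{(1,1),(1,1)}=1$ and $\liri{(2)}{(1,1)}{(1,1)}=1$. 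Your map can never reach it: the aggregate of any of the four matrices is either the identity or the antidiagonal $2\times 2$ matrix, whose RSK tableaux have shapes $(2),(2)$ or $(1,1),(1,1)$ respectively, and the shapes of the pair in $\lr\alpha\beta\tau$ are forced to equal the shapes of the Kostka factors. So the ``leftover distribution data'' cannot supply a second, independent shape $\beta$; your claim that it ``records a pair of Littlewood-Richardson multitableaux \ldots of shapes $(\alpha,\beta)$'' has no source for $\beta$ at all.

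The construction the paper uses (following~\cite{av12}) avoids this by applying RSK to each level matrix $A^{(k)}$ \emph{separately}, obtaining pairs $(P_k,Q_k)$ of common shape $\gamma(k)$ with $|\gamma(k)|=\tau_k$: the shape equality holds slice by slice, not globally. One then forms the product $P=P_r\boldsymbol{\cdot}\cdots\boldsymbol{\cdot}P_1$ by column-inserting the column words of the $P_k$, recording the boxes created at stage $k$ with the column word of the canonical tableau $\cano{\gamma(k)}$; this yields a semistandard tableau $P$ of some shape $\beta$ with content $\mu$, together with a Littlewood-Richardson multitableau $S$ of shape $\beta$ and content $(\gamma(1),\dots,\gamma(r))$ --- the canonical-word recording is exactly what certifies the reverse lattice condition you flagged as the hard part. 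Repeating this with the recording tableaux $Q_k$ produces $Q$ of an in general \emph{different} shape $\alpha$ with content $\la$, and a multitableau $T$ of shape $\alpha$. The pair $(T,S)$ lies in $\lr\alpha\beta\tau$ because $S$ and $T$ share their content, inherited from $\forma{P_k}=\forma{Q_k}$; this common content is the only surviving trace of the per-slice shape equality, replacing the global shape equality your approach mistakenly imposes. Invertibility then follows step by step from the invertibility of RSK and of column insertion, as in your step (iv).
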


Note that since $\la$ and $\mu$ are partitions, $\alpha$ and $\beta$ run over all partitions of
$n$ such that $\alpha \mayori \la$ and $\beta \mayori \mu$ in the dominance order.
We need the explicit description of the bijection:
let $A=(a_{ijk})$ be an element in $\mtlibre \la \mu \tau$.
First, we split $A$ into its \emph{\textbf{level matrices}}
$A^{(k)} = \bigl( a_{ij}^{(k)}\bigr)$, $k\in \nume r$, where $a_{ij}^{(k)}= a_{ijk}$.
For each $k\in \nume r$, let $(P_k,Q_k)$ be the pair of semistandard Young tableaux
associated to $A^{(k)}$ under the RSK correspondence~\cite{ful, kn70, st99}.

To the sequence of tableaux $\vector Pr$ we associate a pair $(P,S)$ in the following way:
let $\gamma(k) =\forma{P_k}$, $k\in \nume r$.
Let $P^{(1)} =P_1$ and $S_1$ be the canonical tableaux $\cano{\gamma(1)}$ of shape $\gamma(1)$.
We define $P^{(k)}$ and $S_k$ inductively:
let $\wcol {P_k}=v_m\cdots v_1$ and $\wcol {\cano{\gamma(k)}}=u_m\cdots u_1$.
The tableau $P^{(k)}$ is obtained by column inserting  $v_1, \dots, v_m$ in
$P^{(k-1)}$, that is,
\begin{equation*}
P^{(k)}=\insertar vm{P^{(k-1)}},
\end{equation*}
and $S_{k}$ is the tableau obtained by placing  $u_1, \dots, u_m$
successively in the new boxes.
Let $P = P^{(r)}$ and $S =\vector Sr$.
Thus $P$ is a semistandard tableau, $S$ is a Littlewood-Richardson
multitableau of content $(\gamma(1),\dots,\gamma(r))$,
$\forma P = \forma S$, and $\contenido P=\sum_{k=1}^r
\contenido{P_k}$.
Note that $P=P_r \boldsymbol{\cdot} \cdots \boldsymbol{\cdot} P_1$, the product of
tableaux as defined in Fulton's book~\cite{ful}.

Similarly we construct $(Q,T)$ out of $\vector Qr$.
Then the correspondence of Theorem~\ref{teor:rsk-3} is given by
\begin{equation} \label{ecua:corresp}
A=(a_{ijk}) \longleftrightarrow (Q,P,(T,S))
\end{equation}

The symmetry of the RSK-correspondence induces a symmetry of bijection~\eqref{ecua:corresp}:
given $A = (a_{ijk})$, denote $\transpuesta A = (b_{ijk})$, where $b_{ijk} = a_{jik}$.
Then, if $A$ corresponds to $(Q,P,(T,S))$, $\transpuesta A$ corresponds to $(P,Q, (S,T))$.

\section{Column-row polytopes} \label{sec:cr polytopes} \hfill

Let $\la$, $\mu$ and $\tau$ be as in Section~\ref{sec:correspondencia}.
Here we apply Theorem~\ref{teor:rsk-3} and the Main Lemma~\ref{lema:cano-cano}
to derive a set of linear equalities and inequalities,
depending on $\la$, $\mu$ and $\tau$, that define a convex polytope whose number of integer points is
$\liri \la\mu\tau$.
More precisely, in the next paragraph, we will describe---by means of linear equalities
and inequalities---all matrices $A\in \mtlibre \la \mu \tau$ that correspond
under bijection~(\ref{ecua:corresp}) to elements in the component
$\kostkasf \la\la \times \kostkasf \mu\mu \times \lr\la\mu\tau$ given by $\alpha=\la$
and $\beta = \mu$ in the disjoint union
\begin{equation*}
\bigsqcup_{\alpha \mayori \la,\,\beta\mayori \mu} \kostkasf \alpha\lambda\times\kostkasf\beta\mu
\times\lr \alpha\beta\tau.
\end{equation*}
Since $\kostkasf \la\la = \{ \cano\la \}$ and $\kostkasf \mu\mu = \{ \cano\mu \} $,
this component has cardinality $\liri \la\mu\tau$.

Let $A=(a_{i,j,k})$ be an element in $\mtlibre \la \mu \tau$ and denote by
$(Q,P, (T,S))$ its image under bijection~(\ref{ecua:corresp}).
For each $k\in \nume r$ let $A^{(k)}$ be the $k$-th level
matrix of $A$, and let $(P_k, Q_k)$ be the pair of semistandard tableaux associated to
$A^{(k)}$ under the RSK correspondence.
Let also
\begin{equation*}
\left(
\begin{matrix}
u_1^{(k)} & \dots & u_{\nu_k}^{(k)} \\
v_1^{(k)} & \dots & v_{\nu_k}^{(k)}
\end{matrix}
\right)
\end{equation*}
be the two-row lexicographic array associated to $A^{(k)}$ as in Section~4.1 in~\cite{ful}.
We consider the words $v^{(k)} = v_1^{(k)} \dots v_{\nu_k}^{(k)}$ and
$v= v^{(r)} \cdots v^{(1)}$.
Then, by the RSK correspondence, $P_k = P(v^{(k)})$.
It follows from definition of the bijection in Theorem~\ref{teor:rsk-3} that
$P = P_r \boldsymbol{\cdot} \cdots \boldsymbol{\cdot} P_1 = P(v)$.

Let $\matcol A =(b_{i,j})$ be the matrix of size $pr \times q$ with blocks
$A^{(r)}, \dots , A^{(1)}$, namely,
\begin{equation} \label{ecua:definicion de A-col}
\matcol A =
\begin{bmatrix}
A^{(r)} \\
\vdots \\
A^{(1)}
\end{bmatrix},
\end{equation}
and let
\begin{equation*}
\left(
\begin{matrix}
s_1 & \dots & s_n \\
t_1 & \dots & t_n
\end{matrix}
\right)
\end{equation*}
be its associated two-row lexicographic array.
Thus $v= t_1 \cdots t_n$.
This implies that if $(R, U)$ is the pair of tableaux associated to $\matcol A$
under the RSK correspondence, $P = R$.
Thus, from part (1) in the Main Lemma~\ref{lema:cano-cano}, we get

\begin{prop} \label{prop:column-conditions}
Let $A \in \mtlibre \la\mu\tau$ and let $(Q, P, (T,S))$ be its image under bijection~(\ref{ecua:corresp});
let $m = \min \{ pr, q \}$.
Then $P=\cano\mu$ if and only if the entries $b_{i,j}$ of $\matcol A$
satisfy the following conditions:

(C1) $b_{i,j}=0$ for all $i$, $j$ such that $i+j > pr+1$; and

(C2) $\sum_{k = i}^{pr + 1 - j} b_{k,j} \ge \sum_{k = i - 1}^{pr - j} b_{k,j+1}$
for all $j\in\nume {m-1}$ and all $i \in \{2, \dots, pr + 1 - j \}$.
\end{prop}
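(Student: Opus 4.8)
The plan is to reduce Proposition~\ref{prop:column-conditions} to part~(1) of the Main Lemma~\ref{lema:cano-cano} by identifying the matrix $\matcol A$ as precisely the two-dimensional matrix to which that lemma applies. First I would record the relevant marginals of $\matcol A=(b_{i,j})$. Since $\matcol A$ is the $pr\times q$ matrix stacking the level matrices $A^{(r)},\dots,A^{(1)}$, its column-sum vector is $\sum_k \bigl(\sum_i a_{i,j}^{(k)}\bigr)_j = \mu$, because summing every entry over the first two indices recovers the $2$-marginal $\mu$ of $A$. Its row-sum vector is some composition $\delta$ of $n$ (the concatenation of the row sums of the blocks $A^{(r)},\dots,A^{(1)}$), which need not be a partition, but this is harmless: the Main Lemma only requires $\matcol A$ to have nonnegative integer entries together with the correct column-sum vector $\mu$, and the shape statement $P=\cano\mu$ depends only on the insertion tableau, not on whether the row-sum vector is weakly decreasing.

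The key step is already essentially furnished by the paragraph preceding the statement: the text establishes that if $(R,U)$ is the pair of tableaux associated to $\matcol A$ under RSK, then $P=R$. This is the heart of the matter, and I would make it explicit by tracing the two-row lexicographic array. Reading $\matcol A$ in column-reading order block by block produces the word $v=v^{(r)}\cdots v^{(1)}=t_1\cdots t_n$, and by the RSK correspondence the insertion tableau of $\matcol A$ is $P(v)$; on the other hand the bijection of Theorem~\ref{teor:rsk-3} gives $P=P_r\boldsymbol{\cdot}\cdots\boldsymbol{\cdot}P_1=P(v)$. Hence $P=R$, the $P$-tableau of $\matcol A$.

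With this identification in hand, the proposition follows by direct application of part~(1) of the Main Lemma to the matrix $B=\matcol A$, whose number of rows is $pr$ and whose number of columns is $q$, so the relevant minimum is $m=\min\{pr,q\}$. Condition~(i) of the lemma, namely $b_{i,j}=0$ for $i+j>pr+1$, is exactly (C1), and condition~(ii), namely $\sum_{k=i}^{pr+1-j}b_{k,j}\ge\sum_{k=i-1}^{pr-j}b_{k,j+1}$ for $j\in\nume{m-1}$ and $i\in\{2,\dots,pr+1-j\}$, is exactly (C2), once $p$ in the lemma is replaced by the row count $pr$ of $\matcol A$. Since $P=R$, we have $P=\cano\mu$ if and only if $R=\cano\mu$, and the Main Lemma characterizes the latter by (C1) and (C2); this chain of equivalences completes the argument.

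I expect the main obstacle to be conceptual rather than computational: one must check carefully that the Main Lemma is genuinely applicable to $\matcol A$ despite its row-sum vector being a composition rather than a partition, and that the relabeling of the lemma's parameter $p$ as $pr$ is legitimate. The lemma was stated for $B$ with \emph{row-sum vector $\la$} a partition, so I would verify that its proof (deferred to Section~\ref{sec:proofs}) uses only the column-sum vector being $\mu$ and never the monotonicity of the row sums—which is plausible since the conclusion $P=\cano\mu$ is a statement about the $P$-tableau alone. Apart from this verification, the proof is a short matter of matching indices between the general statement and the block matrix $\matcol A$.
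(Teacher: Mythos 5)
Your proposal is correct and takes essentially the same route as the paper: the paper's proof consists precisely of the identification $P=R$ (the insertion tableau of $\matcol A$) via the concatenated two-row lexicographic arrays, followed by the application of part~(1) of the Main Lemma~\ref{lema:cano-cano} to $\matcol A$ with the lemma's $p$ read as $pr$ and $m = \min\{pr,q\}$. Your additional check---that part~(1) of the Main Lemma never uses that the row-sum vector is a partition, only that the column-sum vector is the partition $\mu$, so it applies even though the row sums of $\matcol A$ form merely a composition---is a point the paper passes over silently, and your resolution of it is the right one.
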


Now, since $Q = Q_r \boldsymbol{\cdot} \cdots \boldsymbol{\cdot} Q_1$,
the symmetry property of the RSK correspondence implies that $Q$
is the insertion tableaux of the matrix
\begin{equation*}
\widetilde A =
\begin{bmatrix}
\transpuesta { \bigl( A^{(r)} \bigr) }\, \\
\vdots \\
\transpuesta { \bigl( A^{(1)} \bigr) }\,
\end{bmatrix}.
\end{equation*}
Therefore, by the same reasoning we applied to $P$, we have that $Q=\cano\la$ if and only if
the entries of $\widetilde A$ satisfy conditions (C1) and (C2) in Proposition~\ref{prop:column-conditions}.
A more convenient way to present these inequalities is to use the transpose of $\widetilde A$,
that is, the matrix $\matrow A$ of size $p \times qr$ defined by
\begin{equation} \label{ecua:definicion de A-row}
\matrow A =
\begin{bmatrix}
A^{(r)} & \cdots & A^{(1)}
\end{bmatrix}.
\end{equation}
Thus we get

\begin{prop} \label{prop:row-conditions}
Let $A \in \mtlibre \la\mu\tau$ and let $(Q, P, (T,S))$ be its image under bijection~(\ref{ecua:corresp});
let $m = \min \{ p, qr \}$.
Then $Q=\cano\la$ if and only if the entries $c_{i,j}$ of $\matrow A$ satisfy
the following conditions:

(R1) $c_{i,j}=0$ for all $i$, $j$ such that $i+j > qr + 1$; and

(R2) $\sum_{l = j}^{qr + 1 -i} c_{i,l} \ge \sum_{l = j - 1}^{qr-i} c_{i+1,l}$
for all $i \in \nume {m-1}$ and all $j \in \{2, \dots, qr + 1 - i\}$.
\end{prop}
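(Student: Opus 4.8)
The plan is to reduce the row conditions to the column conditions of Proposition~\ref{prop:column-conditions} by exploiting the symmetry of the RSK correspondence, exactly as the surrounding text suggests. The key structural fact, already established in the discussion preceding the statement, is that $Q = Q_r \boldsymbol{\cdot} \cdots \boldsymbol{\cdot} Q_1$ is the insertion tableau of the stacked matrix $\widetilde A$ whose blocks are the transposes $\transpuesta{(A^{(r)})}, \dots, \transpuesta{(A^{(1)})}$. First I would make this precise: the symmetry of RSK says that if $A^{(k)} \longleftrightarrow (P_k, Q_k)$, then $\transpuesta{(A^{(k)})} \longleftrightarrow (Q_k, P_k)$, so the roles of the insertion and recording tableaux are interchanged. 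Consequently, reading the column words of the transposed blocks and column-inserting them reproduces $Q$ as the insertion tableau of $\widetilde A$, just as $P$ was the insertion tableau of $\matcol A$. This is the analogue, for $Q$, of the computation that led to $P = R$ for $\matcol A$.

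Having identified $\widetilde A$ as the relevant matrix, I would apply part (1) of the Main Lemma~\ref{lema:cano-cano} directly to it. The matrix $\widetilde A$ has size $qr \times p$ (it stacks $r$ blocks each of size $q \times p$), its row-sum vector is $\mu$ and its column-sum vector is $\la$, and $Q$ plays the role of the insertion tableau. The Main Lemma then gives that $Q = \cano\la$ if and only if $\widetilde A$ satisfies conditions (C1) and (C2) of Proposition~\ref{prop:column-conditions}, now with the parameters $p$ and $q$ replaced by $qr$ and $p$ respectively, and with $m = \min\{qr, p\}$. At this stage the characterization is already complete; what remains is purely a change of variables to present it in terms of $\matrow A$ rather than $\widetilde A$.

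The final step is the bookkeeping identity $\matrow A = \transpuesta{(\widetilde A)}$, which is immediate from the block definitions in~\eqref{ecua:definicion de A-row} and the description of $\widetilde A$: transposing the stacked column of transposed blocks yields the single row of blocks $[\,A^{(r)} \cdots A^{(1)}\,]$. Writing $c_{i,j}$ for the entries of $\matrow A$ and the entries of $\widetilde A$ as $c_{j,i}$, the vanishing condition $b_{i,j}=0$ for $i+j > qr+1$ on $\widetilde A$ becomes (R1), and the inequality (C2) for $\widetilde A$, which is a column-wise comparison, transposes into the row-wise comparison (R2); the summation index migrates from the outer column index to the inner row index, producing $\sum_{l=j}^{qr+1-i} c_{i,l} \ge \sum_{l=j-1}^{qr-i} c_{i+1,l}$ over $i \in \nume{m-1}$ and $j \in \{2,\dots,qr+1-i\}$.

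I expect the only genuinely delicate point to be verifying that the transpose really does swap the two inequalities of the Main Lemma into one another cleanly, i.e.\ that applying part (1) to $\widetilde A$ and then transposing gives precisely part (2)'s shape of inequality with the correct index ranges, rather than some off-by-one variant. This amounts to checking that the two halves of the Main Lemma, parts (1) and (2), are themselves transpose-symmetric statements, which is plausible by inspection of their symmetric wording but should be confirmed by tracking the indices through the transposition. Everything else is a direct invocation of results already proved.
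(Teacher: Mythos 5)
Your proposal is correct and follows the paper's own argument essentially verbatim: using $Q = Q_r \boldsymbol{\cdot} \cdots \boldsymbol{\cdot} Q_1$ and the symmetry of RSK to identify $Q$ as the insertion tableau of the stacked matrix $\widetilde A$, applying part (1) of the Main Lemma~\ref{lema:cano-cano} (i.e.\ the reasoning behind Proposition~\ref{prop:column-conditions}) to $\widetilde A$ with parameters $qr$ and $p$, and then transposing to restate the conditions on $\matrow A = \transpuesta{\bigl(\widetilde A\bigr)}$, which is exactly how the paper derives (R1) and (R2). One harmless slip: the row-sum vector of $\widetilde A$ is not $\mu$ (it is the concatenation of the column-sum vectors of the level matrices $A^{(r)}, \dots, A^{(1)}$), but this does not affect the argument, since part (1) of the Main Lemma only uses the column-sum vector of the matrix (here indeed $\la$) and its number of rows (here $qr$).
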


These results motivate next definition.

\begin{defi} \label{defi:cr-pol}
Let $A$ be a 3-dimensional matrix of size $p \times q \times r$ with nonnegative
real entries.
We say that $A$ satisfies the \emph{\textbf{vanishing conditions}} if the entries of $\matcol A$
satisfy~(C1) and the entries of $\matrow A$ satisfy~(R1);
we say that $A$ satisfies the \emph{\textbf{column inequalities}} if the entries of $\matcol A$
satisfy~(C2); and we say that $A$ satisfies the \emph{\textbf{row inequalities}} if
the entries of $\matrow A$ satisfy~(R2).
We denote by $\crpollibre \tau$ the set of elements in the $3$-way transportation
polytope $\transporte \la\mu\tau$ that satisfy the vanishing conditions and the column and row inequalities.
This is a rational convex polytope which we call \emph{\textbf{column-row polytope}}.
\end{defi}

For any subset $S$ of a convex polytope $P$, we denote by $\# S$ the number of integer points in $S$.
Then Propositions~\ref{prop:column-conditions} and~\ref{prop:row-conditions} imply the following

\begin{teor} \label{teor:cr-poli}
Let $\la$, $\mu$ be partitions of $n$, let $\tau$ be a composition of $n$.
Then the correspondence~(\ref{ecua:corresp}) restricts to a one-to-one correspondence between
the set of integer points in $\crpollibre \tau$ and the set $\lrlibre \la\mu\tau$.
Therefore
\begin{equation*}
\liri \la\mu\tau = \# \crpollibre \tau.
\end{equation*}
\end{teor}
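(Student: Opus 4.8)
The plan is to show that bijection~(\ref{ecua:corresp}) carries the integer points of $\crpollibre \tau$ exactly onto the set $\lrlibre \la\mu\tau$, and then read off the numerical identity by comparing cardinalities. First I would recall the setup from Section~\ref{sec:correspondencia}: an integer point of $\crpollibre \tau$ is, by Definition~\ref{defi:cr-pol}, precisely a matrix $A \in \mtlibre \la\mu\tau$ whose column matrix $\matcol A$ satisfies~(C1) and~(C2) and whose row matrix $\matrow A$ satisfies~(R1) and~(R2). By Theorem~\ref{teor:rsk-3} every such $A$ corresponds to a unique quadruple $(Q,P,(T,S))$ lying in some component $\kostkasf \alpha\la \times \kostkasf \beta\mu \times \lr \alpha\beta\tau$, where $\alpha \mayori \la$ and $\beta \mayori \mu$. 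The heart of the argument is to identify which component this is.

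The key step is to invoke Propositions~\ref{prop:column-conditions} and~\ref{prop:row-conditions}, which were established immediately above via the Main Lemma~\ref{lema:cano-cano}. Proposition~\ref{prop:column-conditions} says that $P = \cano\mu$ if and only if $\matcol A$ satisfies~(C1) and~(C2); Proposition~\ref{prop:row-conditions} says that $Q = \cano\la$ if and only if $\matrow A$ satisfies~(R1) and~(R2). Therefore an integer point of $\crpollibre \tau$ is exactly a matrix $A$ whose image $(Q,P,(T,S))$ has $Q = \cano\la$ and $P = \cano\mu$. Since $\kostkasf \la\la = \{\cano\la\}$ forces $\alpha = \la$ and $\kostkasf \mu\mu = \{\cano\mu\}$ forces $\beta = \mu$, the quadruple lands in the single component indexed by $\alpha = \la$, $\beta = \mu$, and is therefore determined by its last coordinate $(T,S) \in \lr \la\mu\tau$. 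Conversely, given any $(T,S) \in \lr \la\mu\tau$, the unique preimage under Theorem~\ref{teor:rsk-3} of $(\cano\la, \cano\mu, (T,S))$ is a matrix in $\mtlibre \la\mu\tau$ whose associated tableaux satisfy $Q = \cano\la$ and $P = \cano\mu$, so by the same two propositions it satisfies the vanishing conditions and the column and row inequalities; hence it is an integer point of $\crpollibre \tau$. This establishes the claimed one-to-one correspondence.

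With the bijection in hand, the numerical identity follows at once: the number of integer points of $\crpollibre \tau$ equals $\#\lr \la\mu\tau$, which by definition~(\ref{ecua:liri tableau}) is $\liri \la\mu\tau$. I would close by noting that the restriction of~(\ref{ecua:corresp}) is well defined because the bijection of Theorem~\ref{teor:rsk-3} is global and we are merely restricting its domain to the subset cut out by the equalities and inequalities of Definition~\ref{defi:cr-pol}; no new combinatorics is required.

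The main obstacle is not the logical skeleton, which is a clean two-way implication, but making sure the bookkeeping matches: one must verify that the matrix whose RSK insertion tableau is controlled by Proposition~\ref{prop:column-conditions} really is $\matcol A$ and not its transpose, and likewise that the row inequalities act on $\matrow A = \transpuesta{\widetilde A}$ with the indices aligned as in~(\ref{ecua:definicion de A-row}). The discussion preceding the two propositions already arranges this---identifying $P$ with the insertion tableau $R$ of $\matcol A$ via the two-row array, and using the symmetry of RSK to pass from $Q$ to the insertion tableau of $\widetilde A$---so the remaining task is simply to cite those identifications rather than reprove them. Once the orientation of the blocks and the roles of $P$ and $Q$ are pinned down, the equivalence is immediate.
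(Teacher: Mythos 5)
Your proposal is correct and is essentially the paper's own argument: the paper obtains Theorem~\ref{teor:cr-poli} as an immediate consequence of Propositions~\ref{prop:column-conditions} and~\ref{prop:row-conditions}, having already observed that the integer points of $\crpollibre \tau$ are exactly the matrices in $\mtlibre \la\mu\tau$ whose image under bijection~(\ref{ecua:corresp}) satisfies $P=\cano\mu$ and $Q=\cano\la$, i.e.\ lands in the component $\kostkasf \la\la \times \kostkasf \mu\mu \times \lr\la\mu\tau$ of the disjoint union in Theorem~\ref{teor:rsk-3}, which has cardinality $\liri \la\mu\tau$. Your bookkeeping remarks about $\matcol A$ versus $\widetilde A$ and its transpose $\matrow A$ also match the identifications the paper sets up just before those two propositions, so nothing is missing.
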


One of our main results is a consequence of Theorem~\ref{teor:cr-poli} and equation~\eqref{ecua:kron-rt}:

\begin{teor} \label{teor:kron-alternante}
Let $\la$, $\mu$, $\nu$ be three partitions of some integer $n$.
Then
\begin{equation} \label{ecua:kron-alternante}
\coefi = \sum_{\gamma \mayori \nu} K_{\gamma\nu}^{(-1)}\, \# \crpollibre \gamma.
\end{equation}
\end{teor}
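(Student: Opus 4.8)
The plan is to obtain the statement as a direct formal consequence of the two results already assembled in this section, namely the Robinson--Taulbee expansion~\eqref{ecua:kron-rt} and Theorem~\ref{teor:cr-poli}. First I would recall equation~\eqref{ecua:kron-rt}, which expresses the Kronecker coefficient as the alternating sum
\begin{equation*}
\coefi = \sum_{\gamma \mayori \nu} K_{\gamma\nu}^{(-1)}\, \liri \la\mu\gamma,
\end{equation*}
where the weights are entries of the inverse Kostka matrix and the index $\gamma$ ranges over the partitions of $n$ that dominate $\nu$.

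Next I would apply Theorem~\ref{teor:cr-poli} to each summand. For a fixed $\gamma \mayori \nu$, the partition $\gamma$ is in particular a composition of $n$, so it is an admissible value for the parameter $\tau$ in that theorem; hence
\begin{equation*}
\liri \la\mu\gamma = \# \crpollibre \gamma.
\end{equation*}
Substituting this identity term by term into the expansion above immediately yields
\begin{equation*}
\coefi = \sum_{\gamma \mayori \nu} K_{\gamma\nu}^{(-1)}\, \# \crpollibre \gamma,
\end{equation*}
which is the desired formula~\eqref{ecua:kron-alternante}.

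I do not expect a genuine obstacle here, since all the substance has already been carried out in establishing the two ingredients: Theorem~\ref{teor:cr-poli} rests on the Main Lemma~\ref{lema:cano-cano} together with the matrix--tableau bijection of Theorem~\ref{teor:rsk-3}, while~\eqref{ecua:kron-rt} is the classical Robinson--Taulbee identity. The only point worth flagging explicitly is that the length $r = \longi\gamma$, and therefore both the ambient affine space $\mtrlibre\la\mu\gamma$ and the polytope $\crpollibre\gamma$, vary with $\gamma$; this causes no difficulty because each summand $\#\crpollibre\gamma$ is evaluated independently in its own ambient space before the weighted sum is taken.
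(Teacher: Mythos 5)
Your proposal is correct and is exactly the paper's argument: the authors state Theorem~\ref{teor:kron-alternante} as an immediate consequence of equation~\eqref{ecua:kron-rt} and Theorem~\ref{teor:cr-poli}, which is precisely the term-by-term substitution $\liri \la\mu\gamma = \# \crpollibre \gamma$ you carry out. Your added remark that each polytope lives in its own ambient space depending on $\longi\gamma$ is a harmless clarification, not a divergence.
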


\begin{obse} \label{obse:teor-kron-alternante}
Because of identity~\eqref{ecua:liri number} and Theorem~\ref{teor:cr-poli},
one can replace any $\gamma$ in the summation above with any composition obtained by permuting its coordinates.
\end{obse}

We get the following geometric upper bounds for Kronecker coefficients.

\begin{coro} \label{coro:kronecker bounds}
Let $\la$, $\mu$, $\nu$ be partitions of $n$.
Then,
\begin{equation*}
\coefi \le \# \crpol \le \# \transporte \la\mu\nu.
\end{equation*}
\end{coro}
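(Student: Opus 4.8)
The plan is to prove the two inequalities independently: the right-hand one is essentially set-theoretic, while the left-hand one will come from reading the Kostka relation in its nonnegative direction rather than through the alternating sum~\eqref{ecua:kron-rt}.

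First I would dispose of the right-hand inequality. By Definition~\ref{defi:cr-pol}, the column-row polytope $\crpol$ is by construction the set of those matrices in the transportation polytope $\transporte \la\mu\nu$ that in addition satisfy the vanishing conditions and the column and row inequalities; hence $\crpol \subseteq \transporte \la\mu\nu$. Passing to integer points preserves inclusion, so $\# \crpol \le \# \transporte \la\mu\nu$ with no further work.

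For the left-hand inequality, the key idea is to expand the permutation character in the irreducible basis instead of expressing $\coefi$ via the inverse Kostka matrix. By Young's rule one has $\permu\nu = \sum_{\gamma \mayori \nu} \kos\gamma\nu\, \cara\gamma$, a combination with nonnegative integer coefficients and with $\kos\nu\nu = 1$. Pairing this decomposition with $\kron$ and invoking~\eqref{ecua:liri caracter} gives
\begin{equation*}
\liri \la\mu\nu = \bili {\kron}{\permu\nu} = \sum_{\gamma \mayori \nu} \kos\gamma\nu\, \coefili \la\mu\gamma .
\end{equation*}
Since every $\kos\gamma\nu \ge 0$ and every Kronecker coefficient $\coefili \la\mu\gamma \ge 0$, and the term indexed by $\gamma = \nu$ contributes exactly $\coefi$, I would conclude $\coefi \le \liri \la\mu\nu$. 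Finally, Theorem~\ref{teor:cr-poli} identifies $\liri \la\mu\nu = \# \crpol$, and chaining the estimates yields $\coefi \le \# \crpol \le \# \transporte \la\mu\nu$.

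The only real point to get right---and the likely pitfall---is the direction in which the Kostka data is used: equation~\eqref{ecua:kron-rt} writes $\coefi$ as a genuinely alternating combination of the $\liri \la\mu\gamma$, from which no inequality is immediate, whereas expanding $\permu\nu$ itself produces the manifestly positive relation above. Once one recognizes that the nonnegativity of the Kostka numbers and the Kronecker coefficients must be combined with Young's rule (and not with its inverse), the argument is entirely formal.
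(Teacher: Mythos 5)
Your proposal is correct and follows essentially the same route as the paper: the paper's proof also derives $\coefi \le \bili {\cara\la \otimes \cara\mu}{\permu\nu} = \liri \la\mu\nu = \# \crpol$ from Young's rule, \eqref{ecua:liri caracter} and Theorem~\ref{teor:cr-poli}, and gets the second inequality from the inclusion $\crpol \subseteq \transporte \la\mu\nu$ in Definition~\ref{defi:cr-pol}. Your version merely spells out the nonnegative Kostka expansion $\permu\nu = \sum_{\gamma \mayori \nu} \kos\gamma\nu\, \cara\gamma$ that the paper invokes implicitly.
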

\begin{proof}
From Young's rule, \eqref{ecua:liri caracter} and~Theorem~\ref{teor:cr-poli} one has
\begin{equation} \label{ecua:kron menor o igual que liri}
\coefi \le \langle \cara\la \otimes \cara\mu, \permu\nu \rangle = \liri \la\mu\nu = \# \crpollibre \nu.
\end{equation}
The second inequality follows directly from the definition of $\crpollibre \nu$.
\end{proof}

\begin{obse}
The inequality $\coefi \le \# \transporte \la\mu\nu$ is apparent from identity~(6) in~\cite{av12}.
It was made explicit in Lemma~1.2 in~\cite{papa20}.
The inequality~\eqref{ecua:kron menor o igual que liri} is also apparent from identity~(10)
in~\cite{av12}.
The novelty in Corollary~\ref{coro:kronecker bounds} is that now this upper bound is given in
terms of the number of integer points in a polytope defined explicitly.
In~\cite{av12} the authors started to use $3$-way contingency tables to gain combinatorial
information about Kronecker coefficients.
The correspondence proved there is one of the main ingredients in the construction of the
column-row polytopes.
\end{obse}

\begin{coro}
Let $\la$, $\mu$, $\nu$ and $\gamma$ be partitions of $n$.
If $\gamma \mayori \nu$ in the dominance order, then
\begin{equation*}
\# \crpollibre \gamma \le \# \crpol.
\end{equation*}
\end{coro}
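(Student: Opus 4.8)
The plan is to reduce the claim to an inequality between the numbers $\liri\la\mu\gamma$ and $\liri\la\mu\nu$ and then prove that by passing to characters. By Theorem~\ref{teor:cr-poli} one has $\#\crpollibre\gamma = \liri\la\mu\gamma$ and $\#\crpol = \liri\la\mu\nu$, so it suffices to show $\liri\la\mu\gamma \le \liri\la\mu\nu$ whenever $\gamma \mayori \nu$. First I would rewrite both sides using identity~\eqref{ecua:liri caracter}, namely $\liri\la\mu\tau = \bili{\kron}{\permu\tau}$, and then expand each permutation character by Young's rule, $\permu\tau = \sum_{\alpha} \kos\alpha\tau\, \cara\alpha$, to obtain
\begin{equation*}
\liri\la\mu\tau = \sum_{\alpha \vdash n} \kos\alpha\tau \, \coefili\la\mu\alpha .
\end{equation*}
This expresses each of the two quantities as a nonnegative linear combination of the \emph{same} Kronecker coefficients, with the Kostka numbers $\kos\alpha\tau$ as coefficients.

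Next I would compare the two expansions term by term. Each Kronecker coefficient $\coefili\la\mu\alpha = \bili{\kron}{\cara\alpha}$ is a multiplicity, hence nonnegative. The decisive input is the monotonicity of Kostka numbers in the content: for every partition $\alpha \vdash n$, the hypothesis $\gamma \mayori \nu$ forces $\kos\alpha\gamma \le \kos\alpha\nu$. Granting this, each summand satisfies $\kos\alpha\gamma\,\coefili\la\mu\alpha \le \kos\alpha\nu\,\coefili\la\mu\alpha$, and summing over $\alpha$ gives $\liri\la\mu\gamma \le \liri\la\mu\nu$; Theorem~\ref{teor:cr-poli} then yields $\#\crpollibre\gamma \le \#\crpol$, as desired. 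Equivalently, one may package the same computation by noting that $\permu\nu - \permu\gamma = \sum_\alpha (\kos\alpha\nu - \kos\alpha\gamma)\,\cara\alpha$ is a genuine character, so its inner product with the character $\kron$ is automatically nonnegative.

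The main obstacle is precisely the Kostka monotonicity statement $\kos\alpha\gamma \le \kos\alpha\nu$ for $\gamma \mayori \nu$, since everything else is bookkeeping. I would establish it by reducing to the covering relations of the dominance order: it is enough to treat the case where $\gamma$ is obtained from $\nu$ by raising a single box, i.e. $\gamma = \nu + e_i - e_j$ for suitable $i<j$, and then to exhibit, for each shape $\alpha$, an injection from the semistandard tableaux of shape $\alpha$ and content $\gamma$ into those of shape $\alpha$ and content $\nu$. This is a classical fact about Kostka numbers and may also simply be cited. Once it is in hand the corollary follows at once, and, pleasantly, the whole argument stays within the same combinatorial and representation-theoretic framework already used for Corollary~\ref{coro:kronecker bounds}.
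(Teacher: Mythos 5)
Your proposal is correct, and it diverges from the paper precisely at the point where the real content lies. Both arguments start the same way: Theorem~\ref{teor:cr-poli} turns the claim into the inequality $\liri\la\mu\gamma \le \liri\la\mu\nu$ for $\gamma \mayori \nu$. But the paper's entire proof is a citation --- it invokes Theorem~4.10 of~\cite{va14} for exactly this monotonicity of the numbers $\liri\la\mu{\,\cdot\,}$ in the dominance order, so no argument appears in the text at all. You instead prove the inequality within classical symmetric-group character theory: by~\eqref{ecua:liri caracter} and Young's rule,
\begin{equation*}
\liri\la\mu\tau \;=\; \bili{\kron}{\permu\tau} \;=\; \sum_{\alpha \vdash n} \kos\alpha\tau\, \coefili\la\mu\alpha,
\end{equation*}
which is the same expansion the paper itself uses later in~\eqref{ecua:young-escuadra}; then nonnegativity of the $\coefili\la\mu\alpha$ plus the Kostka monotonicity $\kos\alpha\gamma \le \kos\alpha\nu$ (for $\gamma \mayori \nu$) gives the termwise comparison. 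That monotonicity is indeed a true, classical fact --- equivalently, $\permu\nu - \permu\gamma$ is a genuine character whenever $\gamma \mayori \nu$ --- and your reduction to single-box raising moves with an injection of semistandard tableaux is the standard way to prove it; citing it is also perfectly acceptable, and it is the one ingredient of your argument that is not pure bookkeeping. What your route buys is self-containedness: the corollary follows from standard facts about Kostka numbers and permutation characters, with no dependence on~\cite{va14}. What the paper's route buys is brevity, and a statement at the level of the $\liri$ numbers themselves (hence of sets of Littlewood--Richardson multitableaux) rather than a purely numerical inequality obtained after expansion into irreducibles.
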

\begin{proof}
This is a consequence of Theorem~\ref{teor:cr-poli} and Theorem~4.10 in~\cite{va14}.
\end{proof}

Extremal components of $\cara\la \otimes \cara\mu$ with respect to the dominance order
can be computed by counting integer points in polytopes.

\begin{coro}
Let $\nu$ be a partition of $n$.

(1) If $\chi^\nu$ is a maximal component (in the dominance order) of $\chi^\la \otimes \chi^\mu$,
then
\begin{equation*}
\coefi =  \# \crpol.
\end{equation*}

(2) If  $\chi^\nu$ is a minimal component (in the dominance order) of $\chi^\la \otimes \chi^\mu$,
then
\begin{equation*}
\coefi = \coefili {\la^{\,\prime}} \mu {\nu^{\,\prime}} =
\# {\rm CR}(\la^{\,\prime},\mu;\nu^{\,\prime}).
\end{equation*}
\end{coro}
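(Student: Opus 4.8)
The plan is to first recast Young's rule so that the tableau number $\liri\la\mu\nu$ appears as a nonnegative combination of Kronecker coefficients, and then to exploit extremality to isolate a single term. Applying Young's rule to the permutation character gives $\permu\nu = \sum_{\gamma} \kos\gamma\nu\,\cara\gamma$, where $\kos\gamma\nu$ vanishes unless $\gamma \mayori \nu$ and $\kos\nu\nu = 1$. Pairing against $\cara\la \otimes \cara\mu$ and using~\eqref{ecua:liri caracter}, I obtain
\begin{equation*}
\liri\la\mu\nu = \sum_{\gamma \mayori \nu} \kos\gamma\nu\, \coefili\la\mu\gamma .
\end{equation*}
This identity is what I will feed into both parts.

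For part (1), assume $\cara\nu$ is a maximal component of $\cara\la \otimes \cara\mu$, so that $\coefili\la\mu\gamma = 0$ for every $\gamma \mayore \nu$. In the displayed sum the terms with $\gamma \ne \nu$ are exactly those with $\gamma \mayore \nu$, and these all vanish; since $\kos\nu\nu = 1$, the sum collapses to $\liri\la\mu\nu = \coefi$. Combining this with Theorem~\ref{teor:cr-poli} yields $\coefi = \liri\la\mu\nu = \#\crpol$, as claimed.

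For part (2), I first record the conjugation symmetry of Kronecker coefficients. Writing $\varepsilon = \cara{(1^n)}$ for the sign character and using $\cara{\la^{\,\prime}} = \cara\la \otimes \varepsilon$, one has $\cara{\la^{\,\prime}} \otimes \cara\mu = (\cara\la \otimes \cara\mu) \otimes \varepsilon$ and $\cara{\nu^{\,\prime}} = \cara\nu \otimes \varepsilon$; since tensoring with the one-dimensional character $\varepsilon$ is an isometry for the inner product of characters, this gives $\coefili{\la^{\,\prime}}\mu{\nu^{\,\prime}} = \coefi$, which is the first equality in the statement. It then remains to prove $\coefili{\la^{\,\prime}}\mu{\nu^{\,\prime}} = \#{\rm CR}(\la^{\,\prime},\mu;\nu^{\,\prime})$, and for this I will apply part (1) to the triple $(\la^{\,\prime}, \mu, \nu^{\,\prime})$. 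The point to verify is that the hypothesis transfers correctly. Conjugation is an order-reversing involution on partitions of $n$, so for any $\epsilon \mayore \nu^{\,\prime}$ one has $\epsilon^{\,\prime} \menore \nu$; together with the symmetry $\coefili{\la^{\,\prime}}\mu\epsilon = \coefili\la\mu{\epsilon^{\,\prime}}$, this shows that a component of $\cara{\la^{\,\prime}} \otimes \cara\mu$ strictly above $\nu^{\,\prime}$ would give a component of $\cara\la \otimes \cara\mu$ strictly below $\nu$. Minimality of $\cara\nu$ in $\cara\la \otimes \cara\mu$ thus forces $\cara{\nu^{\,\prime}}$ to be a maximal component of $\cara{\la^{\,\prime}} \otimes \cara\mu$, and part (1) applied to $(\la^{\,\prime}, \mu, \nu^{\,\prime})$ delivers $\coefili{\la^{\,\prime}}\mu{\nu^{\,\prime}} = \#{\rm CR}(\la^{\,\prime},\mu;\nu^{\,\prime})$, completing the proof.

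I expect the only delicate step to be the bookkeeping in part (2): correctly tracking the reversal of the dominance order under conjugation and confirming that the sign-twist symmetry converts the minimality hypothesis on $\nu$ into the maximality hypothesis on $\nu^{\,\prime}$ needed to invoke part (1). Everything else is a direct collapse of the Young-rule sum and an appeal to Theorem~\ref{teor:cr-poli}.
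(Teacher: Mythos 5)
Your proof is correct and follows essentially the same route as the paper: the paper's proof simply cites Lemma~3.2 of~\cite{av12} (whose content is exactly what you establish by hand---the collapse of the Young-rule expansion $\liri \la\mu\nu = \sum_{\gamma \mayori \nu} \kos\gamma\nu\, \coefili \la\mu\gamma$ under maximality, together with the sign-twist symmetry $\coefili {\la^{\,\prime}}\mu{\nu^{\,\prime}} = \coefi$ and the order-reversal of conjugation), and then applies Theorem~\ref{teor:cr-poli} to convert $\liri \la\mu\nu$ into $\# \crpol$. The only difference is that your argument is self-contained where the paper outsources the character-theoretic step to the earlier reference.
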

\begin{proof}
This is a direct consequence of Lemma~3.2 in~\cite{av12}, Theorem~\ref{teor:cr-poli} and the fact
that taking conjugates is an antiautomorphism in the lattice of partitions under the dominance order.
\end{proof}

We can also give a new proof of the following result of A.~Regev (Thm.~12 in ~\cite{reg80}).

\begin{coro} \label{coro:regev}
Let $\la$, $\mu$, $\nu$ be partitions of $n$ of lengths $p$, $q$, $r$, respectively.
If $\coefi >0$, then $q \le pr$.
\end{coro}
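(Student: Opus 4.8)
The plan is to use the column-row polytope machinery, specifically Theorem~\ref{teor:cr-poli}, which tells us that $\liri\la\mu\nu = \#\crpollibre\nu$. Since $\coefi > 0$, equation~\eqref{ecua:kron menor o igual que liri} in the proof of Corollary~\ref{coro:kronecker bounds} gives $\coefi \le \liri\la\mu\nu = \#\crpollibre\nu$, so $\crpollibre\nu$ must contain at least one integer point. First I would fix such an integer point $A = (a_{ijk}) \in \crpollibre\nu$, a $3$-dimensional matrix of size $p \times q \times r$ with plane sums $\la$, $\mu$, $\nu$. By Definition~\ref{defi:cr-pol}, $A$ satisfies the vanishing conditions, in particular condition~(C1): the entries $b_{i,j}$ of the $pr \times q$ matrix $\matcol A$ vanish whenever $i+j > pr+1$.

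The key step is to extract the inequality $q \le pr$ purely from this vanishing condition together with the fact that $A$ is a genuine matrix realizing the marginals. Since $\mu$ has length exactly $q$, the $q$-th column sum of $\matcol A$ equals $\mu_q > 0$, so there must exist some row index $i$ with $b_{i,q} \ne 0$. By~(C1) applied with $j = q$, any nonzero entry $b_{i,q}$ forces $i + q \le pr + 1$, hence $i \le pr + 1 - q$. For such an index $i$ to exist as a valid row index of the $pr \times q$ matrix $\matcol A$ we need $1 \le i \le pr$, and combining $i \le pr + 1 - q$ with $i \ge 1$ yields $pr + 1 - q \ge 1$, that is, $q \le pr$. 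This is the inequality we want.

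The main obstacle I anticipate is making rigorous the claim that the $q$-th column of $\matcol A$ is genuinely nonzero, i.e.\ that $\mu_q > 0$ really does force a nonvanishing entry in column $q$. This is where the hypothesis that $\mu$ has length \emph{exactly} $q$ (rather than at most $q$) is essential: the column sums of $\matcol A$ are precisely $\mu_1, \dots, \mu_q$, and $\longi\mu = q$ guarantees $\mu_q > 0$. One must also confirm that $\crpollibre\nu$ is nonempty as a set of integer points, not merely as a rational polytope; but this follows because $\coefi > 0$ forces $\liri\la\mu\nu > 0$ via~\eqref{ecua:kron menor o igual que liri}, and $\liri\la\mu\nu = \#\crpollibre\nu$ counts integer points by Theorem~\ref{teor:cr-poli}. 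With these two points secured, the chain $q \le pr$ follows immediately and the argument is complete.
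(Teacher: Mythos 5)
Your proof is correct and follows essentially the same route as the paper: both deduce from $\coefi>0$ (via Corollary~\ref{coro:kronecker bounds}) that $\crpol$ contains a point, and then use condition~(C1) of Proposition~\ref{prop:column-conditions} together with $\mu_q>0$ to conclude $q\le pr$. The only difference is presentational---the paper argues by contradiction (assuming $q>pr$ forces the $q$-th column of $\matcol X$ to vanish), while you argue directly from a nonzero entry in that column---which is the same idea.
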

\begin{proof}
Suppose $\coefi >0$ and $q >pr$.
We get a contradiction.
By Corollary~\ref{coro:kronecker bounds} the polytope $\crpol$ is not empty.
Let $X \in \crpol$.
By (C1) in Proposition~\ref{prop:column-conditions} the $q$-th column of $\matcol X$,
as defined in~\eqref{ecua:definicion de A-col}, is equal to zero.
But, since the $\mu_q>0$, this column should not be zero: a contradiction.
\end{proof}

\begin{obse}
Let $p$, $q$, $r$ be the lengths of $\la$, $\mu$, $\nu$, respectively.
In order to have as few as possible summands in equation~\eqref{ecua:kron-alternante},
we make use of the symmetry of Kronecker coefficients and assume without loss
of generality that $r \le p$, $q$.
Also by symmetry we assume $p \le q$.
Moreover, if $\coefi > 0$ one has, by Corollary~\ref{coro:regev}, $q \le pr$ .
Besides, when $r=1$ the computation of the Kronecker coefficient is trivial.
So, we assume $2 \le r$.
To summarize, when dealing with Kronecker coefficients, due to their properties,
we will assume \emph{without loss of generality} that
\begin{equation} \label{ecua:desigualdades-p-q-r}
\boxed{\ \phantom{\Bigm|} 2 \le r \le p \le q \le pr. \phantom{\Bigm|} \ }
\end{equation}
Note that under the assumption $p \le q$ condition (C1) in Proposition~\ref{prop:column-conditions}
is stronger than condition (R1) in Proposition~\ref{prop:row-conditions}.
\end{obse}

\section{Column-row cones} \label{sec:cr-cones} \hfill

For each triple $(p,q,r)$ of positive integers let $\crcone pqr$ be the set of all
$3$-dimen\-sional matrices of size $p \times q \times r$ with nonnegative real entries
satisfying the vanishing conditions and the column and row inequalities given in
the Definition~\ref{defi:cr-pol}.
It is a convex cone with vertex at the origin, which we call \emph{\textbf{column-row cone}}.
If $\la$, $\mu$ and $\tau$ are as in Section~\ref{sec:correspondencia}, then the column-row
polytope $\crpollibre \tau$ is the intersection of the cone $\crcone pqr$ with the
affine space $\mtrlibre \la \mu \tau$.

In this section we compute the dimension of the column-row cone and the highest dimension of a
column-row polytope.

\begin{obses} \label{obse:cr-prop}
(1) Since the column and row conditions are symmetric, the linear map
$(x_{i,j,k}) \asocia (x_{j,i,k})$ defines a linear isomorphism
$\crcone pqr \cong \crcone qpr$.
Thus we can assume without loss of generality that $p \le q$.
This is in concordance with~\eqref{ecua:desigualdades-p-q-r}.

(2) Let $( p^\prime, q^\prime, r^\prime )$ be another triple of positive integers such that
$p^\prime \ge p$, $q^\prime \ge q$ and $r^\prime \ge r$.
For any $X=(x_{i,j,k}) \in \crcone pqr$ let $E(X) = Y = (y_{i,j,k})$ be defined by
\begin{equation*}
y_{i,j,k} =
\begin{cases}
x_{i,j,k} & \text{if $i\le p$, $j \le q$ and $k\le r$};   \\
0 & \text{otherwise}.
\end{cases}
\end{equation*}
Then the correspondence $X \asocia E(X)$ defines an injective map
\begin{equation*}
\fun E{\crcone pqr}{\crcone {p^\prime} {q^\prime} {r^\prime}}
\end{equation*}
whose image is the face of $\crcone {p^\prime} {q^\prime} {r^\prime}$
given by the set of equalities $x_{i,j,k} = 0$ with $i >p$, $j > q$ and $k > r$.

(3) If $pr < q$, condition (C1) in Proposition~\ref{prop:column-conditions} implies that
the last column of $\matcol X$ is equal to $0$, for any $X \in \crcone pqr$.
Then, by the previous remark,
\begin{equation*}
\crcone pqr \cong \crcone p{q-1}r.
\end{equation*}
Thus we can assume without loss of generality that $q \le pr$.
This is in accordance with~\eqref{ecua:desigualdades-p-q-r}.
\end{obses}

As in Section~\ref{sec:correspondencia} we use the following notation
for $X = (x_{i,j,k}) \in \crcone pqr$
\begin{equation} \label{ecua:notacion-matriz-tres-d}
\entrada ijk = x_{i,j,k}, \quad \text{and} \quad X^{(k)} = \multimat x.
\end{equation}
The first level matrix $\nivel 1$ of a matrix $X\in \crcone pqr$ has a particularly nice form:

\begin{prop} \label{prop:primer-nivel}
If $p \le q$, the first level matrix of any $X \in \crcone pqr$ has the form:
\begin{equation} \label{ecua:primer-nivel}
\nivel 1 =
\begin{bmatrix}
x_1 & x_2 & \cdots & x_p & 0 & \cdots & 0 \\
x_2 & x_3 & \cdots & 0 & 0 & \cdots & 0 \\
\vdots & \vdots &  \ddots & \vdots & \vdots & \ddots & \vdots \\
x_p & 0 & \cdots & 0 & 0 & \cdots & 0
\end{bmatrix}.
\end{equation}
In particular, the entries of $\nivel 1$ in each diagonal $D_k = \{ (i,j) \mid i+j = k +1 \}$ coincide.
\end{prop}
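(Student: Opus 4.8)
The plan is to obtain both assertions—the vanishing pattern and the constancy along anti-diagonals—directly from the defining conditions of $\crcone pqr$, read off at the location occupied by $\nivel 1$. In the stacked matrix $\matcol X$ of~\eqref{ecua:definicion de A-col}, the block $\nivel 1$ is the bottom one, occupying rows $pr-p+1,\dots,pr$, while in $\matrow X$ of~\eqref{ecua:definicion de A-row} it occupies the last $q$ columns. The vanishing is then immediate: condition (C1) gives $b_{i,j}=0$ for $i+j>pr+1$, which for an entry of the bottom block becomes $\entrada ij1=0$ whenever $i+j>p+1$. So it remains only to show that on each diagonal $D_k$ (with $k\le p$) the entries of $\nivel 1$ are equal.

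First I would rewrite (C2) and (R2) as conditions on $\nivel 1$ alone. Writing $a_{i,j}=\entrada ij1$ and using (C1) to drop the vanishing tails, (C2) reads $\sum_{k\ge i}b_{k,j}\ge\sum_{k\ge i-1}b_{k,j+1}$ for the columns of $\matcol X$; restricting the row index to the bottom block, and keeping its local value at least $2$ so that the right-hand tail stays inside $\nivel 1$ and does not reach the block above it, this becomes
\begin{equation*}
\sum_{i'\ge i}a_{i',j}\ \ge\ \sum_{i'\ge i-1}a_{i',j+1}\qquad(i\ge 2).
\end{equation*}
A short check shows that both tails run over exactly the same anti-diagonals $i+j,\,i+j+1,\dots,p+1$. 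Dually, (R1) and (R2) give
\begin{equation*}
\sum_{j'\ge j}a_{i,j'}\ \ge\ \sum_{j'\ge j-1}a_{i+1,j'}\qquad(j\ge 2),
\end{equation*}
again with both sides supported on the same anti-diagonals.

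I would then finish by downward induction on the anti-diagonal index $d=i+j$, starting from the outermost value $d=p+1$ (where the tails above are empty) and decreasing to $d=2$. Assuming every anti-diagonal $i+j=d'$ with $d'>d$ is already constant, I substitute these common values into the two displayed inequalities; all contributions from anti-diagonals above $d$ cancel, leaving the single entrywise comparisons $a_{i,j}\ge a_{i-1,j+1}$ (from the columns, for $i\ge 2$) and $a_{i,j}\ge a_{i+1,j-1}$ (from the rows, for $j\ge 2$) along $i+j=d$. Chaining the former yields $a_{d-1,1}\ge a_{d-2,2}\ge\cdots\ge a_{1,d-1}$ and chaining the latter yields the reverse string $a_{1,d-1}\ge a_{2,d-2}\ge\cdots\ge a_{d-1,1}$, so all entries with $i+j=d$ coincide. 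This completes the induction; the common value on $D_k$ is the number $x_k$ in~\eqref{ecua:primer-nivel}.

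The only delicate point—and the step I expect to be the real obstacle—is the boundary effect in the reduction above. Because the right-hand side of (C2) starts one row higher, the column comparison is available only for local row index $\ge 2$ and therefore misses the topmost adjacent pair on each anti-diagonal; dually, the row comparison misses the bottommost pair. What rescues the argument is precisely that the two resulting chains run in opposite directions: between them they cover every adjacent pair on the anti-diagonal, and being reverses of one another they force equalities rather than mere inequalities. I would also stress that the marginals $\la$ and $\mu$ are never used; only the defining (in)equalities of the cone enter, together with the hypothesis $p\le q$, which is what keeps all the column indices $j=d-i$ within the admissible ranges of (C2) and (R2).
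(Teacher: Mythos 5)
Your proposal is correct and takes essentially the same route as the paper: the paper proves Proposition~\ref{prop:primer-nivel} by repeating the argument of Corollary~\ref{coro:cano-cano} with Propositions~\ref{prop:column-conditions} and~\ref{prop:row-conditions}, namely the vanishing pattern from (C1) and then, diagonal by diagonal from the outermost one inward, cancellation of the already-constant outer diagonals to obtain two oppositely directed chains of single-entry inequalities that force equality. One minor remark: your ``delicate point'' is not actually an obstacle, since each chain by itself already covers every adjacent pair on the anti-diagonal (the column comparisons with local row index $i\ge 2$ include the topmost pair $a_{2,d-2}\ge a_{1,d-1}$); what is genuinely needed from having both chains is only that they run in opposite directions, which is exactly how you (and the paper) conclude.
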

\begin{proof}
This is proved in the same way as Corollary~\ref{coro:cano-cano} using
Propositions~\ref{prop:column-conditions} and~\ref{prop:row-conditions}.
\end{proof}

\begin{prop} \label{prop:dim-crcone}
If $p \le q \le pr$, the dimension of a column-row cone is:
\begin{equation*}
\textstyle
\dim \crcone pqr = pqr - \binom{p}{2} - \binom {q}{2}.
\end{equation*}
\end{prop}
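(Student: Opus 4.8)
The plan is to compute the dimension of $\crcone pqr$ by exhibiting it as the affine span (here linear span, since the cone has vertex at the origin) of an explicit set of free coordinates, and then counting how many coordinates are genuinely free. First I would invoke the vanishing conditions. Condition (C1) applied to $\matcol X$ forces $x_{i,j,k}=0$ whenever the row index $i$ (within level $k$) and column index $j$ satisfy $i+j>pr+1$ after accounting for the block position; more usefully, I would instead work level by level. The key structural observation is Proposition~\ref{prop:primer-nivel}: under the hypothesis $p\le q$, the vanishing condition (C1), when restricted to the top block $A^{(r)}$ sitting at the top of $\matcol X$, together with the row vanishing (R1) restricted to the rightmost block $A^{(r)}$ of $\matrow X$, forces each level matrix $X^{(k)}$ to be supported on the staircase region $\{(i,j): i+j\le \text{(some bound)}\}$. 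I would make precise, for each $k$, exactly which entries $x_{i,j,k}$ are forced to vanish.

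The heart of the count is then to show that, apart from the vanishing conditions, the column and row inequalities (C2) and (R2) impose no further equality constraints on a generic (relative-interior) point of the cone, so they do not lower the dimension. Since (C2) and (R2) are inequalities, a full-dimensional cone lives inside them iff its relative interior can be chosen to satisfy all of them strictly; thus the dimension of $\crcone pqr$ equals the dimension of the linear space cut out by the vanishing equalities alone, namely the space of matrices whose forced entries are zero. I would therefore count the number of non-forced entries. Writing $\notforced$ for the set of triples $(i,j,k)$ that are \emph{not} forced to be zero by (C1) and (R1), the claim reduces to
\begin{equation*}
\textstyle
\# \notforced = pqr - \binom p2 - \binom q2.
\end{equation*}
The combinatorial identity is transparent: in the top level, the column vanishing removes a triangular staircase of $\binom p2$ entries (the diagonal $i+j>p+1$ portion forced by the top block), and symmetrically the row vanishing removes $\binom q2$ entries; for $k\ge 2$ the blocks sit lower/further right in $\matcol X$ and $\matrow X$, so (C1) and (R1) impose no vanishing at all on those levels when $q\le pr$. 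Hence exactly $\binom p2+\binom q2$ of the $pqr$ coordinates are forced to zero.

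The remaining point, and the step I expect to be the main obstacle, is the strictness argument: I must produce a single matrix $X\in\crcone pqr$ lying in the relative interior, i.e.\ with all non-forced entries positive and all inequalities (C2), (R2) satisfied strictly. The natural candidate is to take all free entries equal to a large constant and then perturb, but one must check that a uniform (or suitably graded) filling of the staircase actually satisfies (C2) and (R2) with strict inequality rather than with equality. Here I would use the precise form of the inequalities: (C2) compares a tail column-sum in column $j$ against a shifted tail in column $j+1$, and for a generic positive filling of the staircase support the left-hand sum exceeds the right because it runs over strictly more (nonzero) terms. Once strictness is verified at one point, that point is in the relative interior of the intersection of the vanishing hyperplanes with the cone, so the dimension of $\crcone pqr$ equals the dimension of that linear subspace, which is $\#\notforced$. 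Combining with the count above gives $\dim\crcone pqr = pqr-\binom p2-\binom q2$, as claimed. The bounds $p\le q\le pr$ are used exactly to guarantee both that Proposition~\ref{prop:primer-nivel} applies and that no extra vanishing occurs in the levels $k\ge 2$.
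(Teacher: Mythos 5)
There is a genuine gap, and it sits exactly at the step you flagged as the main obstacle. Your central premise---that apart from the vanishing conditions the inequalities (C2) and (R2) impose no equality constraints on the cone, so that $\dim \crcone pqr$ equals the number of non-forced coordinates---is false. By Proposition~\ref{prop:primer-nivel} (proved just like Corollary~\ref{coro:cano-cano}), at \emph{every} point of $\crcone pqr$ the first level matrix $\nivel 1$ has constant diagonals: opposite chains of column and row inequalities squeeze the entries on each diagonal $i+j=k+1$ from both sides and force them to be equal. In other words, certain inequalities from (C2) and (R2) are tight on the whole cone, so the relative-interior point you intend to construct (``all non-forced entries positive and all inequalities strict'') does not exist, and the cone has codimension $\binom{p}{2}$ inside the coordinate subspace cut out by the vanishing conditions alone. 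Your method, carried out correctly, would yield $pqr$ minus the number of forced zeros, which is too large by exactly $\binom{p}{2}$.

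Your count of forced zeros is also incorrect, and the two errors cancel to produce the right formula. When $p \le q \le pr$, condition (C1) on $\matcol X$ forces exactly $\binom{q}{2}$ entries to vanish, and since $p \le q$ these contain all the zeros forced by (R1); the total is $\binom{q}{2}$, not $\binom{p}{2}+\binom{q}{2}$. Moreover these zeros are not confined to a single level: level $k$ has forced zeros at all $(i,j)$ with $i+j > pk+1$, which is a nonempty set whenever $pk+1 < p+q$. For instance, with $p=3$, $q=6$, $r=3$ (Example~\ref{ejem:grados-de-libertad-cr-363}), level $2$ has three forced zeros, contradicting your claim that levels $k \ge 2$ carry none. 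The correct accounting, which is how the paper proves the upper bound, is: $\binom{q}{2}$ coordinates are forced to zero by (C1), and a further $\binom{p}{2}$ coordinates of the first level are \emph{dependent} (not zero) because of the constant-diagonal equalities. For the lower bound the paper does not argue via a relative interior of a coordinate subspace; it exhibits an explicit open hypercube of dimension $pqr-\binom{p}{2}-\binom{q}{2}$ inside the cone, built from graded intervals $I_q \prec \cdots \prec I_1$ together with the additive constant $q(r-1)$ on the nonzero first-level entries, which guarantees the row inequalities (R2); the points of this hypercube satisfy the non-tight inequalities strictly while of course obeying the diagonal equalities---precisely the feature your argument overlooks.
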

\begin{proof}
We show first that the number of independent entries in any element $X$ in $\crcone pqr$
is at most $d = pqr - \binom{p}{2} - \binom {q}{2}$.
Since $p\le q$, (C1) implies (R1).
Let $\matcol X=(b_{i,j})$.
Condition (C1) and the inequality $pr \ge q$ imply that the $\binom{q}{2}$ entries
$b_{i,j}$ of $\matcol X$ satisfying $i+j > pr +1$ are zero.
Proposition~\ref{prop:primer-nivel} implies that in the first level of any
matrix in $\crcone pqr$ there are, besides the zero entries just mentioned,
$\binom {p}{2}$ entries dependent on other entries.
Therefore the number of independent entries in $X$ is at most $d$.
In other words, $\dim \crcone pqr \le d$.
We proceed to prove the other inequality.
It is enough to show that $\crcone pqr$ contains an open hypercube of dimension $d$.
For this, we choose $q$ non-empty open intervals in $\real$, $I_j = (a_j, b_j)$
such that $0 < a_q$, $b_1 <1$ and $b_{j+1} < a_j$ for all $j \in \nume{q-1}$.
Thus, any element in $I_j$ is greater than any element in $I_{j+1}$.
We then consider the hypercube $H$ formed by all matrices $X= \multimat x$
of size $p \times q \times r$ whose entries are of the form
\begin{equation*}
\entrada ijk =
\begin{cases}
0 & \text{if $i+j-1 > kp$}; \\
q(r-1) + \varepsilon_{i+j-1}^{(1)} & \text{if $i+j-1 \le p$ and $k=1$}; \\
\varepsilon_{i,j}^{(k)} & \text{if $i+j-1 \le kp$ and $k \ge 2$},
\end{cases}
\end{equation*}
where $\varepsilon_j^{(1)} \in I_j$ for $j \le p$ and $\varepsilon_{i,j}^{(k)} \in I_j$
if $i+j-1 \le kp$ and $k\ge 2$.
This hypercube has dimension $d$ (see Example~\ref{ejem:crcone-362}).
It is straightforward to check that each $X$ satisfies (C1), (C2) and (R1).
For (R2) we use that $1 + \varepsilon_i^{(1)} > \varepsilon_{i+1,q}^{(2)}$,
$1 + \varepsilon_{i,j+1}^{(k)} > \varepsilon_{i+1,j}^{(k)}$ and
$1 + \varepsilon_{i,1}^{(k)} > \varepsilon_{i+1,q}^{(k+1)}$.
That is the reason why we included the summand $q(r-1)$ in the definition of the nonzero
entries of $\nivel 1$.
Therefore $H$ is contained in $\crcone pqr$.
Thus $\dim \crcone pqr \ge d$ and the claim follows.
\end{proof}

\begin{ejem} \label{ejem:crcone-362}
Let $X$ be one of the matrices defined in the proof of Proposition~\ref{prop:dim-crcone} for the cone
$\crcone 362$.
Then its level matrices are
\begin{equation*}
\nivel 1 =
\begin{bmatrix}
6 + \varepsilon_1^{(1)} & 6 + \varepsilon_2^{(1)} & 6 + \varepsilon_3^{(1)} &
\hspace{.5em} 0 \hspace{.5em} & \hspace{.5em} 0 \hspace{.5em} & \hspace{.5em} 0  \hspace{.4em}\\[.1cm]
6 + \varepsilon_2^{(1)} & 6 + \varepsilon_3^{(1)} & 0 & 0 & 0 & 0 \\[.1cm]
6 + \varepsilon_3^{(1)} & 0 & 0 & 0 & 0 & 0
\end{bmatrix},
\end{equation*}
\begin{equation*}
X^{(2)} =
\begin{bmatrix}
\hspace{.8em} \varepsilon_{1,1}^{(2)} \hspace{.9em} & \hspace{.9em} \varepsilon_{1,2}^{(2)} \hspace{.9em} &
\hspace{.9em} \varepsilon_{1,3}^{(2)} \hspace{.9em} &
\varepsilon_{1,4}^{(2)} & \varepsilon_{1,5}^{(2)} & \varepsilon_{1,6}^{(2)} \\[.1cm]
\varepsilon_{2,1}^{(2)} & \varepsilon_{2,2}^{(2)} & \varepsilon_{2,3}^{(2)} &
\varepsilon_{2,4}^{(2)} & \varepsilon_{2,5}^{(2)} & 0 \\[.1cm]
\varepsilon_{3,1}^{(2)} & \varepsilon_{3,2}^{(2)} & \varepsilon_{3,3}^{(2)} &
\varepsilon_{3,4}^{(2)} & 0 & 0
\end{bmatrix}.
\end{equation*}
\end{ejem}

As an application we give a new proof of a result due to M.~Clausen
and H.~Meier (Satz~1.1 in~\cite{cm93}) and Y.~Dvir (Theorem~1.6 in~\cite{dvi93}).
We derive it from the following result.
Recall that the \emph{\textbf{intersection}} of two partitions $\la$ and $\mu$ is
\begin{equation*}
\la \cap \mu = ( \min\{\la_1, \mu_1\}, \min\{\la_2, \mu_2\}, \dots ).
\end{equation*}

\begin{teor} \label{teor:cdm}
Let $\la$ and $\mu$ be partitions of $n$.

(1) If $\nu$ is a partition of $n$ such that $\crpol \neq \vacio$, then $\nu_1 \le |\la \cap \mu|$.

(2) Let $\zeta = (k, 1^\ell)$ be a partition of $n$ such that $k \le |\la \cap \mu|$.
Then $\# \crpollibre \zeta >0$.
\end{teor}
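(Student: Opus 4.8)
The plan is to treat the two parts separately. For part~(1), I first reduce to the case $p \le q$: the symmetry $\crcone pqr \cong \crcone qpr$ of Remark~\ref{obse:cr-prop}(1) interchanges the roles of $\la$ and $\mu$ and leaves both $\nu$ and $|\la \cap \mu|$ unchanged, so nonemptiness of $\crpol$ is preserved. Now take any $X \in \crpol$ and examine its first level matrix $\nivel 1$. By Proposition~\ref{prop:primer-nivel} it has the anti-triangular form~\eqref{ecua:primer-nivel}: its $(i,j)$ entry equals a single value $x_{i+j-1} \ge 0$ when $i+j-1 \le p$ and vanishes otherwise. The crucial point is that the sum of row $i$ of $\nivel 1$ equals $x_i + x_{i+1} + \cdots + x_p$ and, being only part of the $i$-th plane sum of $X$ in the first direction, is at most $\la_i$; the sum of column $i$ equals the same quantity and is at most $\mu_i$. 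Hence $\sum_{k=i}^p x_k \le \min\{\la_i, \mu_i\} = (\la \cap \mu)_i$ for every $i$. Since the total sum of $\nivel 1$ is the first plane sum $\nu_1$ in the third direction, summing over rows yields
\begin{equation*}
\nu_1 = \sum_{i=1}^p \sum_{k=i}^p x_k \le \sum_{i=1}^p (\la \cap \mu)_i \le |\la \cap \mu|,
\end{equation*}
the desired inequality.

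For part~(2) I will produce an element of $\lr \la\mu\zeta$; by Theorem~\ref{teor:cr-poli} this gives $\# \crpollibre \zeta = \liri \la\mu\zeta > 0$. First I read off the shape of an LR-multitableau of type $\zeta = (k, 1^\ell)$. Its initial tableau $T_1$ has the straight shape $\la(1) \vdash k$, and a Littlewood-Richardson tableau of straight shape must be canonical: by Lemma~\ref{lema:cano-palabra} one has $T_1 = \cano{\la(1)}$, with content $\la(1)$. Each subsequent $T_i$ ($i \ge 2$) is a single box whose reverse lattice word must be $1$, so its content is $(1)$. Thus $T$ has content $(\la(1),(1),\dots,(1))$ and, symmetrically, $S$ has content $(\mu(1),(1),\dots,(1))$ with $\mu(1) \vdash k$. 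Requiring $T$ and $S$ to have the same content therefore reduces to the single equality $\la(1) = \mu(1) =: \delta$, a partition of $k$ contained in both $\la$ and $\mu$.

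It remains to produce such a $\delta$. Under the hypothesis $k \le |\la \cap \mu|$, the partition $\la \cap \mu$ has at least $k$ boxes, so deleting removable corners from it until $k$ boxes remain yields a sub-partition $\delta \vdash k$ with $\delta \subseteq \la \cap \mu$; in particular $\delta \subseteq \la$ and $\delta \subseteq \mu$. One then completes the chains $\delta = \la(1) \subseteq \cdots \subseteq \la(r) = \la$ and $\delta = \mu(1) \subseteq \cdots \subseteq \mu(r) = \mu$ by adjoining the remaining $\ell$ boxes one at a time (every skew shape admits a standard filling), which builds a valid pair $(T,S) \in \lr \la\mu\zeta$. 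I expect the only delicate step to be the structural analysis in part~(2), namely forcing $T_1$ to be canonical and collapsing the content-matching condition to $\la(1) = \mu(1)$; Lemma~\ref{lema:cano-palabra} is precisely what dissolves this difficulty, after which both parts are routine.
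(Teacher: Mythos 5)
Your proposal is correct. For part (1) you follow essentially the paper's own argument: reduce to $p \le q$ by the symmetry of Remark~\ref{obse:cr-prop}, use Proposition~\ref{prop:primer-nivel} to see that $\nivel 1$ is constant on antidiagonals, bound $\sum_{j=i}^p x_j$ by $\min\{\la_i,\mu_i\}$ via the row and column sums of $\nivel 1$, and sum over $i$ to obtain $\nu_1 \le |\la \cap \mu|$. For part (2), however, you take a genuinely different route. The paper never leaves the polytope: it inducts on $\ell$, removes one box from $\la$ and one from $\mu$ to obtain $\alpha, \beta \vdash n-1$, obtains by induction an integer point of ${\rm CR}(\alpha,\beta;(k,1^{\ell-1}))$, stacks on a new level matrix with a single unit entry in position $(i,j)$, and then checks by hand that the enlarged matrix still satisfies the column inequalities (C2) and the row inequalities (R2). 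You instead move the whole problem to the tableau side of Theorem~\ref{teor:cr-poli}: since $\# \crpollibre \zeta = \liri \la\mu\zeta$, it suffices to exhibit one pair in $\lr \la\mu\zeta$, and you observe that for type $\zeta = (k,1^\ell)$ such a pair amounts to a partition $\delta \vdash k$ with $\delta \subseteq \la \cap \mu$ (the common content of the two initial tableaux, which Lemma~\ref{lema:cano-palabra} forces to be canonical) together with saturated chains in Young's lattice from $\delta$ up to $\la$ and up to $\mu$; the existence of $\delta$ is exactly the hypothesis $k \le |\la \cap \mu|$, and the chains always exist. Your argument is shorter, needs no inequality verification, and handles the case $\ell = 0$ uniformly rather than as a separate base case; the price is that it leans on the full strength of the bijection behind Theorem~\ref{teor:cr-poli} (hence on RSK and Theorem~\ref{teor:rsk-3}), whereas the paper's inductive construction is self-contained in the geometry --- it produces an explicit integer point and shows that the defining inequalities of column-row polytopes can be manipulated directly, in keeping with the paper's aim of exhibiting these polytopes as a stand-alone tool. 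Both proofs are complete; note only that your appeal to Theorem~\ref{teor:cr-poli} is not circular, since that theorem is established before the present one.
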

\begin{proof}
(1) By hypothesis, there is an $X \in \crpol$.
Suppose $\la$ and $\mu$ have lengths $p$ and $q$, respectively.
As we noted before we may assume $p \le q$.
Since $\nivel 1$ is as in~\eqref{ecua:primer-nivel}, one has for all $i \in \nume p$
that
\begin{equation*}
\suma jip x_j \le \min \{ \la_i, \mu_i \}.
\end{equation*}
Hence,
\begin{equation*}
\nu_1 = \suma j1p j x_j = \suma i1p \suma jip x_j \le \suma i1p \min \{ \la_i, \mu_i \} = |\la \cap \mu|.
\end{equation*}

(2) We make induction on $\ell$.
If $\ell = 0$, $k=n$.
Therefore, $\la = \mu$.
It is easy to see that ${\rm CR}(\la,\la; (n))$ has exactly one point, which is integral.
Now suppose $\ell >0$.
Let $\rho$ be any partition of $k$ contained in $\la \cap \mu$.
Since $k<n$, there exists partitions $\alpha$ and $\beta$ of $n-1$ such that
$\rho \subseteq \alpha \subseteq \la$ and $\rho \subseteq \beta \subseteq \mu$.
Hence, there is $i \in \nume p$ such that $\alpha_i + 1 = \la_i$ and $\alpha_s = \la_s$
for all $s \neq i$, and there is $j \in \nume q$ such that $\beta_j + 1 = \mu_j$ and
$\beta_t = \mu_t$ for all $t \neq j$.
By induction hypothesis there is some integer point $Y \in {\rm CR}(\alpha, \beta; (k, 1^{\ell - 1})$.
It has level matrices $Y^{(1)}, \dots, Y^{(\ell)}$.
We define $Y^{(\ell + 1)}$ as the matrix with a $1$ is position $(i,j)$ and with $0$'s
in the remaining entries.
Let $X$ have level matrices $Y^{(1)}, \dots, Y^{(\ell)}, Y^{(\ell + 1)}$.
Then $X$ has $1$-marginals $\la$, $\mu$, $\zeta$.
We need to show that $X$ is in $\crpollibre \zeta$.
We know that $Y$ satisfies the column conditions given in (C2) in Proposition~\ref{prop:column-conditions}.
Since $X$ was obtained from $Y$ by adding one level matrix that has only one nonzero entry in
position $(i,j)$, we only have to check column inequalities between the $(j-1)$-th column and
the $j$-th column of $\matcol X$ and row inequalities between the $(i-1)$-th row and the $i$-th row
of $\matrow X$.
The number of terms on each side of the new column inequalities is at least $p \ell - j + 2$.
The sum of the terms on the left-hand side of any such inequality is always $\beta_{j-1}$
and the sum of the terms on the right-hand side of any such inequality is either $\beta_j$ or
$\beta_j + 1$.
Since $\beta_{j-1} = \mu_{j-1} \ge \mu_j = \beta_j + 1$, $X$ satisfies all column inequalities.
In a similar way one proves that $X$ satisfies all row inequalities given in (R2) in
Proposition~\ref{prop:row-conditions}.
Hence $X$ is in $\crpollibre \zeta$ and $\# \crpollibre \zeta >0$.
\end{proof}

\begin{coro}[Clausen-Meier, Dvir]
Let $\la$, $\mu$ be partitions of $n$.

(1) If $\nu$ is a partition of $n$ such that $\coefi >0$, then $\nu_1 \le |\la \cap \mu|$.

(2) Let $k = |\la \cap \mu|$.
Then there is a partition $\nu$ of $n$ such that $\coefi >0$ and $\nu_1 = |\la \cap \mu|$.
\end{coro}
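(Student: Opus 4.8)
The plan is to derive both statements directly from Theorem~\ref{teor:cdm}, bridging between the combinatorial quantity $\# \crpollibre \gamma$ and the Kronecker coefficient $\coefi$ by means of the inequalities and identities already established.

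For part~(1), I would first note that $\coefi > 0$ forces the polytope $\crpol$ to be nonempty. Indeed, by Corollary~\ref{coro:kronecker bounds} one has $\coefi \le \# \crpol$, so $\# \crpol > 0$ and in particular $\crpol \neq \vacio$. Theorem~\ref{teor:cdm}(1) then yields $\nu_1 \le |\la \cap \mu|$ at once.

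For part~(2), set $k = |\la \cap \mu|$, let $\ell = n - k$, and consider $\zeta = (k, 1^\ell)$, a partition of $n$ whose first part is exactly $k \le |\la \cap \mu|$. By Theorem~\ref{teor:cdm}(2) we have $\# \crpollibre \zeta > 0$, hence $\liri \la\mu\zeta > 0$ by Theorem~\ref{teor:cr-poli}. The crucial step is to convert this positivity into a \emph{positive} Kronecker coefficient. Note that $\# \crpollibre \zeta > 0$ does not by itself give $\coefi > 0$, since the expansion in Theorem~\ref{teor:kron-alternante} is an alternating sum. Instead I would decompose the permutation character by Young's rule, $\permu\zeta = \sum_\gamma \kos\gamma\zeta \cara\gamma$, and combine it with identity~\eqref{ecua:liri caracter} to write $\liri \la\mu\zeta = \bili{\kron}{\permu\zeta} = \sum_{\gamma \mayori \zeta} \kos\gamma\zeta \coefili\la\mu\gamma$, using that $\kos\gamma\zeta > 0$ exactly when $\gamma \mayori \zeta$. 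Positivity of the left-hand side then forces some summand to be positive, so there exists $\gamma \mayori \zeta$ with $\coefili\la\mu\gamma > 0$.

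Finally I would pin down the first part of such a $\gamma$. On one hand $\gamma \mayori \zeta$ gives $\gamma_1 \ge \zeta_1 = k$; on the other hand, applying part~(1) of the present corollary to $\gamma$ (valid because $\coefili\la\mu\gamma > 0$) gives $\gamma_1 \le |\la \cap \mu| = k$. Hence $\gamma_1 = k = |\la \cap \mu|$, and taking $\nu = \gamma$ completes the proof. The main obstacle is exactly this conversion in part~(2): the alternating sum in Theorem~\ref{teor:kron-alternante} is useless for deducing positivity, so the argument must be routed through the manifestly nonnegative expansion furnished by Young's rule, after which an extremal component is isolated whose first part is forced to equal $|\la \cap \mu|$ by part~(1).
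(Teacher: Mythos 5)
Your proposal is correct and follows essentially the same route as the paper: part~(1) via Corollary~\ref{coro:kronecker bounds} and Theorem~\ref{teor:cdm}(1), and part~(2) by expanding $\# \crpollibre \zeta = \liri \la\mu\zeta$ through Young's rule into $\sum_{\gamma} \kos\gamma\zeta\, \coefili\la\mu\gamma$, then sandwiching $\gamma_1$ between the dominance condition $\gamma \mayori \zeta$ and the bound from part~(1). The paper's proof is just a more compact phrasing of exactly this argument.
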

\begin{proof}
(1) If $\coefi >0$, then, by Corollary~\ref{coro:kronecker bounds}, $\crpol$ is not empty.
Part (1) of Theorem~\ref{teor:cdm} implies the result.

(2) Let $k = |\la \cap \mu|$ and $\zeta = (k, 1^{n-k})$.
By part (2) in Theorem~\ref{teor:cdm} one has $\# \crpollibre \zeta >0$.
It follows from Young's rule and Theorem~\ref{teor:cr-poli} that
\begin{equation} \label{ecua:young-escuadra}
\# \crpollibre \zeta = \sum_{\nu \vdash n} K_{\nu\zeta}\, \coefi.
\end{equation}
By part (1) and a property of the Kostka numbers the positive terms in the sum correspond to partitions
$\nu$ such that $\nu_1 \le k$ and $\nu \mayori \zeta$ in the dominance order.
Hence $\nu_1 = k$.
Since the sum in equation~\eqref{ecua:young-escuadra} is positive, there is at least one partition
$\nu$ of $n$ such that $\coefi >0$ and $\nu_1 = |\la \cap \mu|$.
\end{proof}

\section{Dimension of the column-row polytope} \label{sec:dim column-row polytope} \hfill

The dimensions of the column-row polytopes are given by the following result.

\begin{teor} \label{teor:dim-crpolytope}
Let $\la$, $\mu$ be partitions of some integer $n$, let $\tau$ be a composition of $n$,
let $p$, $q$, $r$ be the lengths of $\la$, $\mu$, $\tau$, respectively.
If $2 \le r$ and $p \le q \le pr$, the polytope $\crpollibre \tau$ has dimension at most
\begin{equation} \label{ecua:dim-cr-polytope}
\textstyle
pqr - \binom{p}{2} - \binom {q}{2} - (p+q+r) + 2.
\end{equation}
\end{teor}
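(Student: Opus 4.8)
The plan is to compute the dimension of $\crpollibre \tau$ as the dimension of the ambient column-row cone $\crcone pqr$, minus the number of \emph{independent} affine constraints imposed by fixing the $1$-marginals $\la$, $\mu$ and $\tau$. By Proposition~\ref{prop:dim-crcone} we already know $\dim \crcone pqr = pqr - \binom p2 - \binom q2$, and the column-row polytope is the slice of this cone by the affine space $\mtrlibre \la \mu \tau$ cut out by the three systems of plane-sum equations. The target formula~\eqref{ecua:dim-cr-polytope} is exactly $\dim \crcone pqr - (p+q+r-2)$, so the whole proof reduces to showing that the marginal equations impose at most $p+q+r-2$ independent constraints on the cone, i.e.\ that the number of \emph{redundant} marginal equations (beyond the one forced by $|\la|=|\mu|=|\tau|$) is at least $p+q+r-2 - (p+q+r-1) $ short of the naive count $p+q+r$. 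More precisely, the naive count of marginal equations is $p+q+r$; one redundancy always holds because all three total sums equal $n$, giving an upper bound of $\dim \crcone pqr - (p+q+r) + 1$. To reach~\eqref{ecua:dim-cr-polytope} I must exhibit \emph{one further} redundancy, reducing the independent constraints from $p+q+r-1$ to $p+q+r-2$.

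First I would locate the extra redundancy using the special structure of the first level matrix $\nivel 1$ established in Proposition~\ref{prop:primer-nivel}. Under the assumption $p \le q \le pr$, that proposition forces $\nivel 1$ to be the ``staircase'' matrix~\eqref{ecua:primer-nivel}, whose nonzero entries on the diagonal $D_k = \{(i,j) \mid i+j = k+1\}$ all share a common value $x_k$ for $k \in \nume p$. The key observation is that this rigid shape ties together the row-marginal and column-marginal equations coming from $\nivel 1$: reading off $\la_i$ and $\mu_j$ as in the proof of Theorem~\ref{teor:cdm}(1), the partial sums $\sum_{j=i}^p x_j$ are simultaneously bounded by $\la_i$ and by $\mu_i$, and the interaction of these with the top row and first column of the staircase produces a linear dependency among the $p+q+r$ marginal functionals that is not a consequence of the global equality $|\la|=|\mu|=|\tau|$ alone. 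Concretely, I expect the first column sum $\mu_1 = \sum_k x_{1,1,k}$ and the combination of $\la_1$ with the $\tau_1 = \sum_{i,j} x_{i,j,1}$ equation to overlap precisely on the single apex entry $x_1 = x_{1,1,1}$ of the staircase, yielding the second relation.

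The cleanest way to make this rigorous is to produce, by the same hypercube construction used in the proof of Proposition~\ref{prop:dim-crcone}, an explicit relatively-open subset of $\crcone pqr$ on which exactly $p+q+r-2$ of the marginal functionals are affinely independent; equivalently, I would compute the rank of the Jacobian of the marginal map restricted to the affine hull of the hypercube $H$ and show it equals $p+q+r-2$. Since $\dim \crpollibre \tau$ is bounded above by $\dim(\text{affine hull of the cone's full-dimensional part}) - \operatorname{rank}$ of the effective marginal constraints, combining the rank computation with Proposition~\ref{prop:dim-crcone} gives the stated upper bound. I would then phrase the final inequality as
\begin{equation*}
\textstyle
\dim \crpollibre \tau \le \dim \crcone pqr - (p+q+r) + 2 = pqr - \binom p2 - \binom q2 - (p+q+r) + 2,
\end{equation*}
closing the argument.

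The main obstacle will be the second paragraph: pinning down \emph{exactly one} extra redundancy and proving there are no more. It is easy to over- or under-count dependencies among $p+q+r$ plane-sum equations on a cone that is itself cut out by the nontrivial inequalities (C1)--(R2), because the staircase constraints of Proposition~\ref{prop:primer-nivel} only govern $\nivel 1$, while the remaining levels $\nivel k$ for $k \ge 2$ are comparatively unconstrained. I therefore expect the delicate point to be verifying that the higher levels contribute enough genuinely independent degrees of freedom that the \emph{only} collapse in the marginal rank comes from the $\nivel 1$ staircase together with the global size equality — so that the count is exactly $p+q+r-2$ and the bound is attained tightly rather than exceeded. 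Controlling this will likely require the hypothesis $2 \le r$ in an essential way (to guarantee the existence of at least one free level $\nivel 2$), which matches the hypotheses of the theorem.
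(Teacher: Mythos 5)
Your proposal contains a genuine error at its core, in the accounting of redundancies among the marginal equations. Among the $p+q+r$ plane-sum functionals $L_i(X)=\sum_{j,k}x_{i,j,k}$, $M_j(X)=\sum_{i,k}x_{i,j,k}$, $N_k(X)=\sum_{i,j}x_{i,j,k}$ there are already \emph{two} universal linear relations in $\real^{pqr}$, namely $\sum_i L_i=\sum_j M_j$ and $\sum_j M_j=\sum_k N_k$ (this is why $\mtrlibre \la \mu \tau$ has codimension $p+q+r-2$, not $p+q+r-1$). So the "one further redundancy" you set out to find is nothing but the second trivial global relation; it has nothing to do with the staircase shape of $\nivel 1$. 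Worse, your logic runs in the wrong direction: redundancies among the constraints \emph{enlarge} the affine slice of the cone and hence \emph{weaken} the upper bound. What your approach actually requires is the opposite statement: that when the functionals are restricted to the linear span of $\crcone pqr$ (the subspace cut out by the forced zeros and the diagonal equalities of Proposition~\ref{prop:primer-nivel}), \emph{no new} dependencies appear, so that the rank stays exactly $p+q+r-2$ and $\dim\crpollibre\tau \le \dim\crcone pqr -(p+q+r-2)$. Your "key observation" asserts precisely the existence of a new, staircase-induced dependency; if that were true, the rank would drop to at most $p+q+r-3$ and your method would only prove the weaker bound $pqr-\binom{p}{2}-\binom{q}{2}-(p+q+r)+3$. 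Fortunately the claim is false (one can check, e.g., that the entries $(i,1)$ and $(1,j)$ of the levels $X^{(k)}$, $k\ge 2$, are unconstrained on the span, which forces any relation to be a combination of the two global ones), but as written the proposal rests on it, and the rank computation itself --- which is the entire content of the theorem in your formulation --- is only announced, never carried out.

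For comparison, the paper avoids any rank argument and simply counts free parameters level by level inside $\crpollibre\tau$: the staircase $\nivel 1$ together with $\sum_{k=1}^{p}k\,x_k=\tau_1$ leaves $p-1$ parameters; each middle level $X^{(k)}$, $1<k<r$, has $pq-1$ parameters; the last level, whose row and column sums are determined by $\la$, $\mu$ and the earlier levels, has $(p-1)(q-1)$ parameters (this is where $r\ge 2$ enters); and condition (C1) forces $\binom{q-p}{2}$ entries of the levels $X^{(2)},\dots,X^{(r)}$ to vanish. The total $(p-1)+(pq-1)(r-2)+(p-1)(q-1)-\binom{q-p}{2}$ equals the stated bound. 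Your slicing strategy can be repaired --- replace the hunt for an extra redundancy by a proof that none exists beyond the two global ones --- but in that corrected form it is essentially a linear-algebra reformulation of the same count, and until the rank of the restricted marginal map is actually computed, the proposal does not constitute a proof.
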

\begin{proof}
Let $X = \multimat x$ be a matrix in $\crpollibre \tau$.
Since the sum of the entries in the first level of $X$ is $\tau_1$ and since $p \le q$,
Proposition~\ref{prop:primer-nivel} implies that there are $p-1$ degrees of freedom
in $\nivel 1$.
Since $q \le pr$, the number of zeros in $X^{(2)}, \dots , X^{(r)}$ forced by
condition (C1) in Proposition~\ref{prop:column-conditions} is $\binom{q-p}{2}$.
The entries of $X^{(k)}$, $1 < k < r$, sum $\tau_k$, therefore in this level there are
$pq - 1$ degrees of freedom minus the zeros forced by condition (C1).
In $X^{(r)}$ we have to take into account the plane sums $\la$ and $\mu$.
Therefore, in this level there are $(p-1)(q-1)$ degrees of freedom minus the zeros forced
by condition (C1).
Here we are using that $r \ge 2$.
As an illustration see Example~\ref{ejem:grados-de-libertad-cr-363}.
Hence, the total number of degrees of freedom is
\begin{equation*}
\textstyle
(p-1) + (pq-1)(r-2) + (p-1)(q-1) - \binom{q-p}{2}.
\end{equation*}
This number is the highest possible dimension of $\crpollibre \tau$ and it is easily seen to be
equal to~\eqref{ecua:dim-cr-polytope}.
\end{proof}

\begin{obses} \label{obses:condiciones en parametros de CR-2}
(1) The polytope $\crpollibre {(n)}$ has either exactly one point if $\la = \mu$ or
is empty if $\la \neq \mu$.
This follows either from Proposition~\ref{prop:primer-nivel} or from the definition
of $\liri \la\mu\rho$ in~\eqref{ecua:liri tableau}.
Since the case $r=1$ is a trivial one, we may assume $r \ge 2$.

(2) Assuming the conditions of Theorem~\ref{teor:dim-crpolytope} on $p$, $q$ and $r$, one sees
that the dimension given in~\eqref{ecua:dim-cr-polytope} is the smallest when we choose $r$
to satisfy the inequality $r \le p$.
In order to have a more efficient implementation of Barvinok's algorithm
we want to have the column row polytopes $\crpollibre \gamma$ in~\eqref{ecua:kron-alternante}
of dimension as small as possible.
Thus, the geometry of these polytopes also leads us to assume the
inequalities~\eqref{ecua:desigualdades-p-q-r}, which were derived from
representation theoretic considerations.
\end{obses}

\begin{ejem} \label{ejem:grados-de-libertad-cr-363}
For example, for $p=3$, $q=6$ and $r=3$ we indicate below with an $f$ the entries
that take arbitrary values and with $d$ those who are determined by the free
variables.
According to the formula we have 26 degrees of freedom:
\begin{equation*}
\begin{bmatrix}
d & f & f & 0 & 0 & 0 \\
d & d & 0 & 0 & 0 & 0 \\
d & 0 & 0 & 0 & 0 & 0
\end{bmatrix}
\qquad
\begin{bmatrix}
d & f & f & f & f & f \\
f & f & f & f &f &  0 \\
f & f & f & f & 0 & 0
\end{bmatrix}
\qquad
\begin{bmatrix}
d & d & d & d & d & d \\
d & f & f & f & f & f \\
d & f & f & f & f & f
\end{bmatrix}.
\end{equation*}
\end{ejem}

\section{Some column and row inequalities} \label{sec:column and row ineqs} \hfill

In this section we introduce an alternative way of presenting column and row inequalities.
This will help us to determine all the facets of the column-row cone in~\cite{vakron2}.
Here we restrict ourselves to the inequalities needed in Section~\ref{sec:red of dim}.
Given an element $X \in \crcone pqr$ we describe the inequalities corresponding to the facets
in terms of the level matrices $X^{(k)}$ of $X$ as defined in~\eqref{ecua:notacion-matriz-tres-d}.
Because of Remarks~\ref{obse:cr-prop}.1, \ref{obse:cr-prop}.3, \ref{obses:condiciones en parametros de CR-2}.1
and~\ref{obses:condiciones en parametros de CR-2}.2 we will assume inequalities~\eqref{ecua:desigualdades-p-q-r},
whenever they are needed.

We use throughout the convention that $\suma iab v_i = 0$ if $a>b$.

\subsection{Column inequalities} \label{subsec:column-ineq} \hfill

We consider the column inequalities
presented in part (C2) of~Proposition~\ref{prop:column-conditions}.
Due to condition (C1) in the same proposition and to the very particular
shape of $\nivel 1$ shown in Proposition~\ref{prop:primer-nivel}
several of these inequalities will reduce, after cancelations and the elimination
of entries forced to be zero, to one and the same inequality.
We give here a smaller set of column inequalities that is equivalent to the original
set given in~(C2).

Let $X \in \crcone pqr$ and look at $\matcol X$ as defined in~\eqref{ecua:definicion de A-col}.
Since the first level matrix $\nivel 1$ of $X$ has the very simple description
given in~\eqref{ecua:primer-nivel}, we write
\begin{equation*}
\text{ $x_j = \entrada 1j1 = \entrada j11$ for $j \in \nume p$ and
$x_j = \entrada 1j1 = 0$ for $j>p$.}
\end{equation*}
Instead of working with the whole $\nivel 1$, we work only with its first row.
In other words, we remove the last $p-1$ rows of $\matcol X$ and work with the resulting matrix,
denoted $\matcol {\ol X}$, of size $\bigl( p(r-1) + 1 \bigr) \times q$.
This smaller matrix contains the same information as $\matcol X$ and is more appropriate for
handling column inequalities.
As an illustration consider the column inequality
\begin{equation*}
\entrada p11 + \cdots + \entrada 211 + \entrada 111 \ge \entrada {p-1}21 + \cdots + \entrada 121 + \entrada p22.
\end{equation*}
Since $\entrada {p-i}11 = \entrada {p-i-1}21$, it is equivalent to $x_1 \ge \entrada p22$.
For this reason it is enough to consider only column inequalities that involve
exactly one entry of the first row of $\nivel 1$ on its left-hand side.
We present in Table~\ref{tabla:ejemplo de elemento en CR} a generic element of $\matcol {\ol X}$
for $p =4$, $q = 11$ and $r = 3$.
Also, the entries forced to be zero coming from the conditions (C1)
in~Proposition~\ref{prop:column-conditions} make several inequalities redundant.
Column inequalities involving this kind of entries will be considered in~\cite{vakron2}.

\begin{table}
\footnotesize
\begin{center}
\renewcommand{\arraystretch}{1.7}
\begin{tabular}{| c c c c c c c c c c c|}
\hline
$\entrada 113$ & $\entrada 123$ & $\entrada 133$ & $\entrada 143$ &
$\entrada 153$ & $\entrada 163$ & $\entrada 173$ & $\entrada 183$ &
$\entrada 193$ & $\entrada 1{10}3$ & $\entrada 1{11}3$ \\
$\entrada 213$ & $\entrada 223$ & $\entrada 233$ & $\entrada 243$ &
$\entrada 253$ & $\entrada 263$ & $\entrada 273$ & $\entrada 283$ &
$\entrada 293$ & $\entrada 2{10}3$ & $\entrada 2{11}3$ \\
$\entrada 313$ & $\entrada 323$ & $\entrada 333$ & $\entrada 343$ &
$\entrada 353$ & $\entrada 363$ & $\entrada 373$ & $\entrada 383$ &
$\entrada 393$ & $\entrada 3{10}3$ & $0$ \\
$\entrada 413$ & $\entrada 423$ & $\entrada 433$ & $\entrada 443$ &
$\entrada 453$ & $\entrada 463$ & $\entrada 473$ & $\entrada 483$ &
$\entrada 493$ & $0$ & $0$ \\
\hline
$\entrada 112$ & $\entrada 122$ & $\entrada 132$ & $\entrada 142$ & $\entrada 152$ &
$\entrada 162$ & $\entrada 172$ & $\entrada 182$ & $0$ & $0$ & $0$ \\
$\entrada 212$ & $\entrada 222$ & $\entrada 232$ & $\entrada 242$ & $\entrada 252$ &
$\entrada 262$ & $\entrada 272$ & $0$ & $0$ & $0$ & $0$ \\
$\entrada 312$ & $\entrada 322$ & $\entrada 332$ & $\entrada 342$ & $\entrada 352$ &
$\entrada 362$ & $0$ & $0$ & $0$ & $0$ & $0$ \\
$\entrada 412$ & $\entrada 422$ & $\entrada 432$ & $\entrada 442$ & $\entrada 452$ &
$0$ & $0$ & $0$ & $0$ & $0$ & $0$\\
\hline
$x_1$ & $x_2$ & $x_3$ & $x_4$ & $0$ & $0$ & $0$ & $0$ & $0$ & $0$ & $0$ \\
\hline
\end{tabular}
\end{center}
\caption{A generic element of $\matcol {\ol X}$ when $r = 3$, $p =4$ and $q = 11$} \label{tabla:ejemplo de elemento en CR}
\end{table}

We relate the numbers $u$, $v$, and $w$ of an entry $\entrada uvw$ of $\matcol {\ol X}$ with
its $t$-th place in  the $v$-th column counting from bottom to top and starting in $\entrada pv2$.
For $t \in \nume {p(r-1)}$ let us write
\begin{equation*}
\text{$t = c_t p + d_t$ where $c_t \ge 0$ and $d_t \in \nume p$.}
\end{equation*}
Then, the $t$-th entry from bottom to top in the $v$-th column of $\matcol {\ol X}$
starting at $\entrada pv2$ is $\entrada {p+1-d_t} v {c_t + 2}$.
For example, the $4$-th entry in the $v$-th column of $\matcol {\ol X}$ (with the values of
$p$, $q$ and $r$ given in Table~\ref{tabla:ejemplo de elemento en CR}) is
$\entrada 1v2$, and the $7$-th entry in the $v$-th column of $\matcol {\ol X}$ is $\entrada 2v3$.

Each column inequality has the form $\suma i1t y_i \ge \suma i1t z_i$, where the
$y_i$'s are entries in some column of $\matcol {\ol X}$, let's say in $j$-th column, and
the $z_i$'s are entries in $(j+1)$-th column of $\matcol {\ol X}$.
We proceed to describe these sums using the notation introduced in~\eqref{ecua:notacion-matriz-tres-d}
and~\eqref{ecua:primer-nivel}.
For each $j \in \nume q$ and each $k \in \nume{2, r}$ let
\begin{equation*}
\gamma_j^{(k)} = \suma i1p x_{i,j}^{(k)}
\end{equation*}
be the sum of all entries in the $j$-th column of $X^{(k)}$.
For $j \in \nume {p+1}$ and $t \in \nume{ (r-1)p }$ let
\begin{equation*}
S_{j,t}^c = \suma k1{c_t} \gamma_j^{(k+1)} + \suma i{p+1-d_t}p \entrada ij{c_t+2}.
\end{equation*}
This is the sum of the first $t$ elements of the $j$-th column of $\matcol {\ol X}$
counted from bottom to top, starting at $\entrada pj2$ and finishing at
$\entrada {p+1-d_t} j {c_t + 2}$.
We also define $S_{j,0}^c = 0$.
For example, if $j=4$ and $t = 6$, we have (with the values of $p$, $q$ and $r$
given in Table~\ref{tabla:ejemplo de elemento en CR})
that $S_{4,6}^c = \gamma_4^{(2)} + \entrada 443 + \entrada 343$.
This sum starts at $\entrada 442$, ends at $\entrada 343$ and has $6$ summands.

\begin{nota}
For each $j \in \nume p$ and $t \in \nume{p(r-1)}$ we denote by $\negra{\colineq jt}$
the column inequality
\begin{equation} \label{ecua:desigualdad Cjt}
x_j + S_{j,t-1}^c \ge S_{j+1,t}^c.
\end{equation}
It compares a sum of $t$ terms in the $j$-th column of $\matcol {\ol X}$ with a sum of $t$ terms
in the $(j+1)$-th column of $\matcol {\ol X}$.
Note that $\colineq pt$ is only defined when $p<q$.
Column inequalities whose left-hand side contains entries in $j$-th column for
$j>p$ will be considered in~\cite{vakron2}.
\end{nota}

\begin{ejem} \label{ejem:col ineq-figura 1}
Some examples of inequalities with the values of $p$, $q$ and $r$ given in
Table~\ref{tabla:ejemplo de elemento en CR} are:
\begin{align*}
\colineq 21: & \quad x_2 \ge \entrada 432. \\
\colineq 46: & \quad x_4 + \gamma_4^{(2)} + \entrada 443 \ge \gamma_5^{(2)} + \entrada 453 + \entrada 353.
\end{align*}
\end{ejem}

\subsection{Row inequalities} \label{subsec:row-ineq} \hfill

Let $X \in \crcone pqr$.
We use the notation of Subsection~\ref{subsec:column-ineq}.
For the description of the row inequalities we start with $\matrow X$ as defined
in~\eqref{ecua:definicion de A-row}, delete its last $q-1$ columns and denote the
resulting matrix of size $\bigl( p \times (r-1)q + 1 \bigl)$ by $\matrow {\ol X}$.
This smaller matrix contains the same information as $X$ but it is more appropriate
for dealing with row inequalities.
We present in Table~\ref{tabla:ejemplo X-row de elemento en CR} a generic element of $\matrow {\ol X}$
for  $r = 3$, $p =3$ and $q = 8$.
Note that, when $p<q$ the structure of row inequalities differs from the structure of column inequalities,
because the entries forced to be zero in $\matrow {\ol X}$ can be separated by nonzero entries.
For example, in Table~\ref{tabla:ejemplo X-row de elemento en CR}, $\entrada 383$ is forced to be zero,
but next to its right $\entrada 312$ could be different from zero and then some entries further to the
right $\entrada 352$ is again forced to be zero.
So, the string of zeros in some rows of $\matrow {\ol X}$ could be not connected.

\begin{table}
\scriptsize
\begin{center}
\renewcommand{\arraystretch}{1.7}
\begin{tabular}{| c c c c c c c c | c c c c c c c c | c |}
\cline{1-17}
$\entrada 113$ & $\entrada 123$ & $\entrada 133$ & $\entrada 143$ & $\entrada 153$ &
$\entrada 163$ & $\entrada 173$ & $\entrada 183$ &
$\entrada 112$ & $\entrada 122$ & $\entrada 132$ & $\entrada 142$ & $\entrada 152$ &
$\entrada 162$ & $0$ & $0$ & $x_1$ \\
$\entrada 213$ & $\entrada 223$ & $\entrada 233$ & $\entrada 243$ & $\entrada 253$ &
$\entrada 263$ & $\entrada 273$ & $\entrada 283$ &
$\entrada 212$ & $\entrada 222$ & $\entrada 232$ & $\entrada 242$ & $\entrada 252$ &
$0$ & $0$ & $0$ & $x_2$ \\
$\entrada 313$ & $\entrada 323$ & $\entrada 333$ & $\entrada 343$ & $\entrada 353$ &
$\entrada 363$ & $\entrada 373$ & $0$ &
$\entrada 312$ & $\entrada 322$ & $\entrada 332$ & $\entrada 342$ & $0$ &
$0$ & $0$ & $0$ & $x_3$ \\
\cline{1-17}
\end{tabular}
\end{center}
\caption{A generic element of $\matrow {\ol X}$ when $r = 3$, $p =3$ and $q = 8$} \label{tabla:ejemplo X-row de elemento en CR}
\end{table}

We now count the entries in the $u$-th row of $\matrow {\ol X}$ from right to left starting
with $\entrada uq2$.
For $s \in \nume{q(r-1)}$ let us write
\begin{equation*}
\text{$s = e_sq + f_s$ where $e_s \ge 0$ and $f_s \in \nume q$}.
\end{equation*}
Then, the $s$-th entry from right to left in the $u$-th row of $\matrow {\ol X}$
starting at $\entrada uq2$ is $\entrada u {q+1-f_s} {e_s+2}$.

Each row inequality has the form $\suma k1s y_k \ge \suma k1s z_k$, where the
$y_k$'s are entries in some row of $\matrow {\ol X}$, let's say in $i$-th row,
and the $z_k$'s are entries in $(i+1)$-th row of $\matrow {\ol X}$.
For each $i \in \nume p$ and each $k \in \nume{2, r}$ let
\begin{equation*}
\rho_i^{(k)} = \suma j1q x_{i,j}^{(k)}
\end{equation*}
be the the sum of all entries in the $i$-th row of $X^{(k)}$.
For $i \in \nume p$ and $s \in \nume{ (r-1)q }$ let
\begin{equation*}
S_{i,s}^r = \suma k1{e_s} \rho_i^{(k+1)} + \suma j{q+1-f_s}q \entrada ij{e_s+2}.
\end{equation*}
This is the sum of the first $s$ elements of the $i$-th row of $\matrow {\ol X}$
counted from right to left, starting at $\entrada iq2$ and finishing at
$\entrada i {q+1-f_s} {e_s+2}$.
We also define $S_{i,0}^r = 0$.
For example, if $i=2$ and $s=11$, we have (with the values of $p$, $q$ and $r$ given
in Table~\ref{tabla:ejemplo X-row de elemento en CR}) that
$S_{2,11}^r = \rho_2^{(2)} + \entrada 283 + \entrada 273 + \entrada 263$.
This sum starts at $\entrada 282$, ends at $\entrada 263$ and has $11$ summands.

\begin{nota}
For each $i \in \nume {p-1}$ and $s \in \nume{(r-1)q}$ we denote by $\negra{\rowineq is}$
the row inequality
\begin{equation} \label{ecua:desigualdad Ris}
x_i + S_{i,s-1}^r \ge S_{i+1,s}^r.
\end{equation}
It compares a sum of $s$ terms of the $i$-th row of $\matrow {\ol X}$ with a sum of $s$ terms of the
$(i+1)$-th row of $\matrow {\ol X}$.
The left-hand side of $\rowineq is$ starts at $x_i$, but after $x_i$
some entries might be forced to be zero (as it is in the case depicted in
Table~\ref{tabla:ejemplo X-row de elemento en CR}).
The right-hand side starts at $\entrada {i+1}q2$, but the first entry not forced to be zero is
$\entrada {i+1}{ \min \{2p-i, q \} }2$.
Hence, among the row inequalities just defined there might be (if $p<q$) several redundant
inequalities.
We determine the irredundant row inequalities in~\cite{vakron2}.
\end{nota}

\begin{ejem} \label{ejem:row ineq-figura 2}
Some examples of inequalities with the values of $p$, $q$ and $r$ given
in Table~\ref{tabla:ejemplo X-row de elemento en CR}.
\begin{align*}
\rowineq 21:  & \quad x_2 \ge \entrada 382, \\
\rowineq 18:  & \textstyle \quad x_1 + \suma j28 \entrada 1j2 \ge \suma j18 \entrada 2j2 = \rho_2^{(2)}, \\
\rowineq 2 {11}: & \textstyle \quad  x_2 + \rho_2^{(2)} + \suma j78 \entrada 2j3 \ge
\rho_3^{(2)} + \suma j68 \entrada 3j3.
\end{align*}
\end{ejem}

\section{Reduction of dimension} \label{sec:red of dim} \hfill

Let $\la$, $\mu$ and $\nu$ be partitions of $n$ of lengths $p$, $q$, $r$, respectively.
As we mentioned in the introduction V.~Baldoni and M.~Vergne showed in~\cite{bavew18}
that the Kronecker coefficient $\coefili {t\la} {t\mu} {t\nu}$ is a quasi-polynomial function
of $t$ of degree at most
\begin{equation*}
\textstyle
d = pqr - \binom p2 - \binom q2 - \binom r2 - (p+q+r) + 2.
\end{equation*}
Hence, one would like to express $\coefi$ as a sign-alternating sum of numbers of integer points in
some polytopes whose definition depends on $\la$, $\mu$ and $\nu$ and
whose dimension does not exceed $d$.
It follows from identities~\eqref{ecua:kron-alternante} and~\eqref{ecua:dim-cr-polytope}
that every Kronecker coefficient can be computed in such a way with the constraint that the
dimension of the required polytopes does not exceed
\begin{equation*}
\textstyle
e = pqr - \binom p2 - \binom q2 - (p+q+r) + 2.
\end{equation*}
Since $e-d = \binom r2$, identity~\eqref{ecua:dim-cr-polytope} is not optimal.

In Theorem~\ref{teor:kron-eficiente} we show how to express $\coefi$ as a
sign-alternating sum of numbers of integer points of polytopes
whose dimension is at most $e-1$, hence reaching the optimal dimension when $r=2$.
We expect that a similar method will help to reach the optimal dimension when $r=3$.

Assume the inequalities given in~\eqref{ecua:desigualdades-p-q-r} and
recall equation~\eqref{ecua:kron-alternante}:
\begin{equation*}
\coefi = \sum_{\gamma \mayori \nu} K_{\gamma\nu}^{(-1)}\, \# \crpollibre \gamma.
\end{equation*}
This summation comes from the expansion of a Jacobi-Trudi determinant
in the ring of symmetric functions (see Section~\ref{sec:coef-de-kron}):
\begin{equation*}
s_\nu = \det\left( h_{\nu_i -i + j} \right)_{i,j \in \nume r}.
\end{equation*}
If we expand this determinant by cofactors along the first column, and then continue successively
expanding the remaining determinants along the second up to the $(r-2)$-th column, we get an
expression of the form
\begin{equation*}
s_\nu = \sum \pm h_\rho \cdot \left[ h_{(a,b)} - h_{(a+1,b-1)} \right] ,
\end{equation*}
where the various $\rho$'s are obtained from the successive cofactor expansions of the first $r-2$ columns,
and $a$ and $b$ are numbers appearing in the remaining $2\times 2$ determinants
\begin{equation*}
\det
\begin{bmatrix}
h_a & h_{a+1} \\
h_{b-1} & h_b
\end{bmatrix}
= h_{(a,b)} - h_{(a+1,b-1)},
\end{equation*}
where $a$ and $b$ are nonnegative integers.
For example, if $\nu = (7,4,2,1)$, after permuting coordinates of the various compositions,
the determinant can be expressed in the following way
\begin{multline*}
\det\left[
\begin{matrix}
h_7 & h_8 & h_9 & h_{10} \\
h_3 & h_4 & h_5 & h_6 \\
h_0 & h_1 & h_2 & h_3 \\
0        & 0        & h_0 & h_1
\end{matrix}
\right] = \\
  \left[ h_{(2,1,7,4)} - h_{(3,0,7,4)} \right]
- \left[ h_{(5,1,7,1)} - h_{(6,0,7,1)} \right]
- \left[ h_{(2,1,8,3)} - h_{(3,0,8,3)} \right] \\
+ \left[ h_{(9,1,3,1)} - h_{(10,0,3,1)} \right]
+ \left[ h_{(5,1,8)} - h_{(6,0,8)} \right]
- \left[ h_{(9,1,4)} - h_{(10,0,4)} \right].
\end{multline*}
In this way, equation~\eqref{ecua:kron-alternante} can be expressed in the form
\begin{equation} \label{ecua:kron-con-politopos}
\coefi = \sum_\tau \pm \left[ \# \crpollibre \tau - \# \crpollibre{\overline\tau} \right],
\end{equation}
where $\tau = (a, b, \rho_1, \dots, \rho_s)$ and $\ol\tau = (a+1, b-1, \rho_1, \dots, \rho_s)$
are of compositions of $n$.
This is possible because of Remark~\ref{obse:teor-kron-alternante}.
It follows from Lemma~4.8 in~\cite{va14} that each term
$\left[ h_{(a,b)} - h_{(a+1,b-1)} \right] \cdot h_\rho$ corresponds to the character of a
representation.
Therefore,
\begin{equation*}
\liri \la\mu \tau \ge \liri \la\mu {\ol\tau}.
\end{equation*}
Hence, by Theorem~\ref{teor:cr-poli}, one has that
\begin{equation*}
\# \crpollibre \tau - \# \crpollibre {\ol\tau} \ge 0.
\end{equation*}
In order to compute each difference
\begin{equation*}
\# \crpollibre \tau - \# \crpollibre {\ol \tau}
\end{equation*}
we define an injective map from a subset of $\crpollibre {\ol \tau}$ into $\crpollibre \tau$.
Recall from Section~\ref{sec:correspondencia} that $\transptres \tau$ denotes the affine
space of all matrices with real entries having 1-marginals $\la$, $\mu$, $\tau$.
For each pair $(\tau, \ol\tau)$ and each $\ell \in \nume p$ we define an affine isomorphism
\begin{equation} \label{ecua:cancelacion parcial}
\fun {\Phi_\ell} {\transptres {\ol\tau}} {\transptres \tau}
\end{equation}
as follows:
let $Z_\ell= \bigl(\multient zijk \bigr)$ be the matrix of size $p \times q \times r$ defined by
\begin{equation*}
\multient zijk =
\begin{cases}
\phantom{-}1 & \text{if $k=1$ and $i+j = \ell$;}\\
-1 & \text{if $k=1$ and $i+j = \ell + 1$;}\\
\phantom{-}1 & \text{if $k=2$, $i=\ell$ and $j=\ell$;}\\
\phantom{-}0 & \text{otherwise.}
\end{cases}
\end{equation*}
For example, if $p=2$, $q=3$ and $r=2$
\begin{equation*}
Z_1 =
\begin{bmatrix}
-1 & 0 & 0\phantom{-} \\ \phantom{-}0 & 0 & 0\phantom{-}
\end{bmatrix}
\quad
\begin{bmatrix}
\phantom{-}1 & 0 & 0\phantom{-} \\\phantom{-} 0 & 0 & 0\phantom{-}
\end{bmatrix}
\end{equation*}
and if $p=3$, $q=4$ and $r=3$
\begin{equation*}
Z_3 =
\begin{bmatrix}
\phantom{-}0 & \phantom{-}1 & -1 & \phantom{-}0\phantom{-} \\
\phantom{-}1 & -1 & \phantom{-}0 & \phantom{-}0\phantom{-} \\
-1 & \phantom{-}0 & \phantom{-}0 & \phantom{-}0\phantom{-}
\end{bmatrix}
\quad
\begin{bmatrix}
\phantom{-}0 & \phantom{-}0 & \phantom{-}0 & \phantom{-}0\phantom{-} \\
\phantom{-}0 & \phantom{-}0 & \phantom{-}0 & \phantom{-}0\phantom{-} \\
\phantom{-}0 & \phantom{-}0 & \phantom{-}1 & \phantom{-}0\phantom{-}
\end{bmatrix}
\quad
\begin{bmatrix}
\phantom{-}0 & \phantom{-}0 & \phantom{-}0 & \phantom{-}0\phantom{-} \\
\phantom{-}0 & \phantom{-}0 & \phantom{-}0 & \phantom{-}0\phantom{-} \\
\phantom{-}0 & \phantom{-}0 & \phantom{-}0 & \phantom{-}0\phantom{-}
\end{bmatrix}.
\end{equation*}
Then, $\Phi_\ell$ is defined by $\Phi_\ell(X) = X + Z_\ell$.
This isomorphism and its inverse map integer points to integer points.

The proofs of the next two lemmas are straightforward.

\begin{lema} \label{lema:cara-menos}
Let $X = \bigl( \entrada ijk \bigr) \in \crpollibre {\ol\tau}$.
Then

(1) if $\ell = p$ and $p = q$, $\Phi_\ell(X)$ is in $\crpollibre \tau$ if and only if $x_p \ge 1$.

(2) If $\ell < p$ or $p < q$, $\Phi_\ell(X)$ is in $\crpollibre \tau$ if and only if
\begin{equation} \label{lema:caras de columnas a restar}
x_\ell + \suma i{p-t+2}p \entrada i \ell 2 \ge 1 + \suma i{p-t+1}p \entrada i {\ell + 1} 2
\end{equation}
for all $t \in \nume{p - \ell + 1}$ and
\begin{equation} \label{lema:caras de renglones a restar}
x_\ell + \suma j{q-s+2} q \entrada \ell j 2 \ge 1 + \suma j {q-s+1}q \entrada {\ell + 1} j 2
\end{equation}
for all $s \in \nume{q-\ell + 1}$.
\end{lema}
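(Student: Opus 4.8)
The plan is to set $X' = \Phi_\ell(X) = X + Z_\ell$ and to read off, condition by condition, exactly when $X'$ lies in $\crpollibre{\tau}$. First I would record how $Z_\ell$ perturbs the data of Section~\ref{sec:column and row ineqs}: in the first level matrix the diagonal $i+j=\ell$ (whose common value is $x_{\ell-1}$) gains $+1$ and the diagonal $i+j=\ell+1$ (whose common value is $x_\ell$) loses $1$, while in the second level the single entry $\entrada\ell\ell2$ gains $+1$; every other entry is untouched. Thus, in the notation of Proposition~\ref{prop:primer-nivel} and Subsections~\ref{subsec:column-ineq}--\ref{subsec:row-ineq}, passing from $X$ to $X'$ replaces $x_{\ell-1}$ by $x_{\ell-1}+1$, replaces $x_\ell$ by $x_\ell-1$, and replaces $\entrada\ell\ell2$ by $\entrada\ell\ell2+1$, leaving all other values fixed. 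Since every modified position has indices at most $p+1$, it lies outside the zero region imposed by (C1) and (R1); hence the vanishing conditions are preserved and, by the marginal computation already used to see that $\Phi_\ell$ carries $\transptres{\ol\tau}$ into $\transptres\tau$, so are the plane sums. It therefore remains only to test nonnegativity, the column inequalities (C2) and the row inequalities (R2) for $X'$.

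For the column inequalities I would use that each sum $S_{j,t}^c$ ranges over levels $k\ge 2$, so the modified first-level values $x_{\ell-1},x_\ell$ occur only as the leading terms of $\colineq{\ell-1}t$ and $\colineq\ell t$, while the modified entry $\entrada\ell\ell2$ occurs only inside the column-$\ell$ sums $S_{\ell,\cdot}^c$. Consequently only the inequalities $\colineq{\ell-1}t$ and $\colineq\ell t$ can change. In $\colineq{\ell-1}t$ the left-hand side gains exactly $1$ while the right-hand side gains at most $1$, so it stays valid. In $\colineq\ell t$ the decisive point is the position of $\entrada\ell\ell2$ in the bottom-to-top ordering of column $\ell$: it is the $(p-\ell+1)$-th entry, hence it lies among the first $t-1$ summands of $S_{\ell,t-1}^c$ exactly when $t\ge p-\ell+2$. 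For such $t$ the $+1$ coming from level two cancels the $-1$ coming from $x_\ell$ and $\colineq\ell t$ is unchanged; for $t\in\nume{p-\ell+1}$ only the $-1$ survives, turning $\colineq\ell t$ into the strict inequality~\eqref{lema:caras de columnas a restar}. Running the identical analysis on $\matrow X$ and the row inequalities $\rowineq is$ yields, symmetrically, that only $\rowineq\ell s$ becomes strict, giving~\eqref{lema:caras de renglones a restar} precisely for $s\in\nume{q-\ell+1}$.

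For nonnegativity I would observe that the only entries of $X'$ that decrease are the first-level entries on the diagonal $i+j=\ell+1$, each equal to $x_\ell-1$; taking $t=1$ in~\eqref{lema:caras de columnas a restar} gives $x_\ell\ge 1+\entrada p{\ell+1}2\ge 1$, so nonnegativity is already implied by the listed inequalities, and conversely it is necessary. Assembling these observations proves part~(2). Part~(1) is the degenerate case $\ell=p=q$: here there is no $(\ell+1)$-st column or row, the inequalities $\colineq\ell t$ and $\rowineq\ell s$ do not occur, and the sole surviving requirement is that the entries $x_p-1$ be nonnegative, that is, $x_p\ge 1$.

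The routine but delicate part of the argument is the bookkeeping of the middle step: correctly isolating which of the many inequalities $\colineq jt$ and $\rowineq is$ are disturbed by $Z_\ell$, and pinning down the exact threshold ($t=p-\ell+2$, and $s=q-\ell+2$ respectively) at which a perturbed inequality switches from strictly strengthened to unchanged. This rests entirely on locating $\entrada\ell\ell2$ within the bottom-to-top and right-to-left orderings that define $S_{j,t}^c$ and $S_{i,s}^r$, which is precisely why the result can fairly be called straightforward once the perturbation $Z_\ell$ has been described explicitly.
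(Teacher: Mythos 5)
Your proof is correct: the bookkeeping for $Z_\ell$ (the diagonal $i+j=\ell$ of $\nivel 1$ rises by one, the diagonal $i+j=\ell+1$ drops by one, $\entrada{\ell}{\ell}{2}$ rises by one), the isolation of $\colineq{\ell-1}{t}$, $\colineq{\ell}{t}$, $\rowineq{\ell-1}{s}$, $\rowineq{\ell}{s}$ as the only inequalities that can move, the thresholds $t\le p-\ell+1$ and $s\le q-\ell+1$ obtained by locating $\entrada{\ell}{\ell}{2}$ in the bottom-to-top and right-to-left orderings defining $S_{j,t}^c$ and $S_{i,s}^r$, and the derivation of nonnegativity of $\Phi_\ell(X)$ from the $t=1$ instance of~\eqref{lema:caras de columnas a restar}, all check out, as does the degenerate case $\ell=p=q$. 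The paper offers no argument here at all---it merely declares the proofs of Lemmas~\ref{lema:cara-menos} and~\ref{lema:cara-mas} straightforward---and your verification, carried out entirely with the reduced column and row inequalities of Section~\ref{sec:column and row ineqs}, is precisely the intended one.
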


\begin{obses}
(1) If $\ell = p$ and $p<q$, the inequalities~\eqref{lema:caras de renglones a restar} are not defined.
Hence, for the equivalence, one only considers the inequalities~\eqref{lema:caras de columnas a restar}.

(2) If $\ell < p$ and $p<q$, due to the entries of $X$ forced to be zero (indicated with $0$'s
in Table~\ref{tabla:ejemplo de elemento en CR}), some of the inequalities corresponding
to~\eqref{lema:caras de renglones a restar} may be redundant.

(3) The inequality~\eqref{lema:caras de columnas a restar} is related
to the inequality $\colineq \ell t$ as defined in~\eqref{ecua:desigualdad Cjt}
and the inequality~\eqref{lema:caras de renglones a restar}
is related to the inequality $\rowineq \ell s$ as defined in~\eqref{ecua:desigualdad Ris}.
\end{obses}

\begin{lema} \label{lema:cara-mas}
Let $X = \bigl( \entrada ijk \bigr) \in \crpollibre \tau$.
Then $X$ is in the image of $\Phi_\ell$ if and only if $x_{\ell-1} \ge 1$, $\multient x\ell \ell 2 \ge 1$,
\begin{equation} \label{lema:caras de columnas a sumar}
x_{\ell-1} + \suma i {p-t+2} p \multient x i {\ell-1} 2 \ge 1 + \suma i {p-t+1} p \multient x i \ell 2
\end{equation}
for all $t \in \nume {p-\ell}$ and
\begin{equation} \label{lema:caras de renglones a sumar}
x_{\ell-1} + \suma j {q-s+2} q \multient x {\ell-1} j 2 \ge 1 + \suma j {q-s+1} q \multient x \ell j 2
\end{equation}
for all $s \in \nume{q - \ell}$.
\end{lema}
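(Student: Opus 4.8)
The plan is to turn the statement into a membership question for the inverse map. Since $\Phi_\ell$ is the affine isomorphism $X \mapsto X + Z_\ell$, its inverse is $X \mapsto X - Z_\ell$, so a matrix $X \in \crpollibre\tau$ lies in the image of $\Phi_\ell$ if and only if $Y := X - Z_\ell$ belongs to $\crpollibre{\ol\tau}$. Now $\crpollibre\tau$ and $\crpollibre{\ol\tau}$ are the intersections of one and the same column-row cone $\crcone pqr$ with the affine spaces $\transptres\tau$ and $\transptres{\ol\tau}$, and $\Phi_\ell$ carries $\transptres{\ol\tau}$ onto $\transptres\tau$; hence $Y$ has the correct $1$-marginals $\la$, $\mu$, $\ol\tau$ automatically. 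The whole problem therefore reduces to deciding when $Y$ lies in the cone, i.e.\ when $Y$ is nonnegative and satisfies the vanishing conditions and the column and row inequalities.

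First I would record the few entries in which $Y$ differs from $X$. By Proposition~\ref{prop:primer-nivel} the first level of $X$ is constant on each diagonal $\{i+j=k+1\}$ with common value $x_k$, and $Z_\ell$ adds $+1$ on the diagonal $\{i+j=\ell\}$ of $\nivel1$, $-1$ on the diagonal $\{i+j=\ell+1\}$, and $+1$ on the single level-two entry $\multient x\ell\ell2$. Passing to $Y = X - Z_\ell$, the only entries that \emph{decrease} are the diagonal of common value $x_{\ell-1}$ and the entry $\multient x\ell\ell2$, so nonnegativity of $Y$ is exactly $x_{\ell-1}\ge1$ and $\multient x\ell\ell2\ge1$. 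Moreover, because $\ell \le p \le q$, every position touched by $Z_\ell$ lies inside the region allowed by conditions (C1) and (R1); since $X$ obeys these vanishing conditions and no nonzero entry is created outside the support, $Y$ obeys them as well.

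The substance of the proof is to verify the column and row inequalities for $Y$, using that $X$ already satisfies those in (C2) and (R2) so that only inequalities meeting the three modified quantities can change. Working in the reduced matrices $\matcol{\ol X}$ and $\matrow{\ol X}$ of Subsections~\ref{subsec:column-ineq} and~\ref{subsec:row-ineq}, I would read the inequality $\colineq{\ell-1}t$ of~\eqref{ecua:desigualdad Cjt} on $Y$: its leading term becomes $x_{\ell-1}-1$, while for $t\in\nume{p-\ell}$ its right-hand side sums level-two entries of column $\ell$ lying strictly above the modified $\multient x\ell\ell2$ and is hence unchanged; moving the $-1$ across gives exactly~\eqref{lema:caras de columnas a sumar}. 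The symmetric reading of $\rowineq{\ell-1}s$ from~\eqref{ecua:desigualdad Ris}, for $s\in\nume{q-\ell}$, yields~\eqref{lema:caras de renglones a sumar}. It then remains to see that every other affected inequality holds automatically: for $\colineq{\ell-1}t$ with larger $t$ the entry $\multient x\ell\ell2$ enters the right-hand side and $x_{\ell-1}$ drops on the left, so both sides fall by $1$ and the inequality is equivalent to the one valid for $X$; and for $\colineq\ell t$ the $+1$ produced on the diagonal $\{i+j=\ell+1\}$ either strictly raises the left-hand side (small $t$) or exactly compensates the drop of $\multient x\ell\ell2$ inside the same left-hand sum (large $t$). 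The row inequalities are treated in the same fashion.

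The main obstacle I anticipate is exactly this case analysis: one must check, diagonal by diagonal, that the three local modifications leave untouched every column and row inequality other than $\colineq{\ell-1}t$ and $\rowineq{\ell-1}s$ in the stated ranges, and that the cancellation mechanism of Subsections~\ref{subsec:column-ineq} and~\ref{subsec:row-ineq} produces precisely~\eqref{lema:caras de columnas a sumar} and~\eqref{lema:caras de renglones a sumar} with no spurious extra constraints. This computation runs parallel to—indeed dual to—the one behind Lemma~\ref{lema:cara-menos}, which explains the close resemblance of the two statements and can be used as a template.
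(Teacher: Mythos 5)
Your proposal is correct: the paper declares this lemma's proof ``straightforward'' and omits it, and what you have written is precisely the intended verification---identify membership in the image of $\Phi_\ell$ with membership of $X - Z_\ell$ in $\crpollibre{\ol\tau}$, note that the marginals and the vanishing conditions hold automatically, read off $x_{\ell-1}\ge 1$ and $\multient x\ell\ell2 \ge 1$ from nonnegativity on the modified diagonal and the modified level-two entry, and observe that among the column and row inequalities only $\colineq{\ell-1}t$ for $t\in\nume{p-\ell}$ and $\rowineq{\ell-1}s$ for $s\in\nume{q-\ell}$ fail to be automatic, yielding exactly~\eqref{lema:caras de columnas a sumar} and~\eqref{lema:caras de renglones a sumar}. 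The case analysis you outline (both sides dropping by one when the sum reaches $\multient x\ell\ell2$, and the $+1$ on the diagonal $\{i+j=\ell+1\}$ compensating or strictly helping in $\colineq \ell t$ and $\rowineq \ell s$) is the correct cancellation mechanism, so no gap remains.
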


\begin{obses}
(1) If $\ell = 1$, $X$ is in the image of $\Phi_\ell$ if and only if
$\multient x\ell \ell 2 \ge 1$.

(2) If $p<q$, due to the entries of $X$ forced to be zero (indicated with $0$'s
in Table~\ref{tabla:ejemplo de elemento en CR}), some of the inequalities corresponding
to~\eqref{lema:caras de renglones a sumar} may be redundant.

(3) The inequality~\eqref{lema:caras de columnas a sumar} is related
to the inequality $\colineq {\ell-1} t$, as defined in in~\eqref{ecua:desigualdad Cjt},
and the inequality~\eqref{lema:caras de renglones a sumar}
is related to the inequality $\rowineq {\ell-1} s$, as defined in~\eqref{ecua:desigualdad Ris}.
\end{obses}

\begin{nota}
We introduce the following notation for the (possibly empty) faces of $\crpollibre \tau$.
For each $i$, $j \in \nume p$ we denote
\begin{equation*}
\posiuno i \tau = \{ X = \bigl( \entrada ijk \bigr) \in \crpollibre \tau \mid x_i = 0 \}
\end{equation*}
and
\begin{equation*}
\posidos ij2\tau = \{ X = \bigl( \entrada ijk \bigr) \in \crpollibre \tau \mid \multient xij2 = 0 \}.
\end{equation*}
Also, let

(1) $\policol jt\tau$ be the intersection of $\crpollibre \tau$ with the hyperplane
associated to the inequality $\colineq jt$ for each $j \in \nume p$ and $t \in \nume p$.

(2) $\polirow is\tau$ be the intersection of $\crpollibre \tau$ with the hyperplane
associated to the inequality $\rowineq is$ for each $i \in \nume {p-1}$ and $s \in \nume q$.

Finally, we consider the following unions of faces:
\begin{align*}
F^+_\ell(\la, \mu, \tau) & = P_{\ell-1}(\la, \mu, \tau) \cup P_{\ell,\ell}^{(2)}(\la, \mu, \tau) \cup
\bigcup_{t=1}^{p-\ell} \policol {\ell-1}t\tau \cup
\bigcup_{s=1}^{q - \ell} \polirow {\ell-1}s\tau,\\
F^-_\ell(\la, \mu, \ol\tau) & = P_p(\la, \mu, \ol\tau) \quad\text{ if $\ell = p$ and $p=q$, and} \\
F^-_\ell(\la, \mu, \ol\tau) & =
\bigcup_{t=1}^{p-\ell+1} \policol \ell t {\ol\tau} \cup
\bigcup_{s=1}^{q - \ell + 1} \polirow \ell s {\ol\tau} \quad\text{ if $\ell < p$ or $p<q$.}
\end{align*}

Note that in the definitions some of the faces in the unions may be not defined (as
noted in the lemmas) or may be empty.
\end{nota}

Let $\la$, $\mu$, $\tau$, $\ol\tau$ and $\ell$ be as above.
We have defined an affine isomorphism $\Phi_\ell$ in~\eqref{ecua:cancelacion parcial}.
The number of integer points in $\crpollibre {\ol\tau}$ whose image is not
in $\crpollibre \tau$ is, according to Lemma~\ref{lema:cara-menos}, $\# F^-_\ell(\la, \mu, \ol\tau)$
and the number of integer points in $\crpollibre \tau$ that are not in the image of $\Phi_\ell$
is, according to Lemma~\ref{lema:cara-mas}, $\# F^+_\ell(\la, \mu, \tau)$.
The remaining integer points in the difference
$\# \crpollibre \tau - \# \crpollibre{\overline\tau}$ cancel out.
Hence, we have the following

\begin{prop} \label{prop:cancelacion geometrica}
Let $\la$, $\mu$, $\tau$, $\ol\tau$ be as above.
Then
\begin{equation} \label{ecua:cancelacion geometrica}
\# \crpollibre \tau - \# \crpollibre{\ol\tau} =
\# F^+_\ell(\la, \mu, \tau) - \# F^-_\ell(\la, \mu, \ol\tau).
\end{equation}
\end{prop}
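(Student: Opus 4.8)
The plan is to treat the affine isomorphism $\Phi_\ell$ from~\eqref{ecua:cancelacion parcial} as a \emph{near}-bijection between the integer points of $\crpollibre{\ol\tau}$ and those of $\crpollibre\tau$, and to read off Lemmas~\ref{lema:cara-menos} and~\ref{lema:cara-mas} as the precise descriptions of where this near-bijection fails. Since $\Phi_\ell(X)=X+Z_\ell$ with $Z_\ell$ integral, $\Phi_\ell$ and its inverse carry integer points of $\transptres{\ol\tau}$ bijectively onto integer points of $\transptres\tau$; the only obstruction is that $\Phi_\ell$ need neither send $\crpollibre{\ol\tau}$ into $\crpollibre\tau$ nor map onto it. The correction terms accounting for this will be exactly $\# F^-_\ell(\la,\mu,\ol\tau)$ and $\# F^+_\ell(\la,\mu,\tau)$.

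First I would identify the two discrepancy sets as unions of faces. By Lemma~\ref{lema:cara-menos}, an integer point $X\in\crpollibre{\ol\tau}$ satisfies $\Phi_\ell(X)\in\crpollibre\tau$ precisely when the inequalities~\eqref{lema:caras de columnas a restar} and~\eqref{lema:caras de renglones a restar} hold (or, when $\ell=p=q$, when $x_p\ge 1$). Since $X$ already satisfies the defining column inequalities $\colineq\ell t$ and row inequalities $\rowineq\ell s$ of $\crpollibre{\ol\tau}$, each of these strengthened-by-one inequalities fails for an integer point exactly when the corresponding original inequality holds with equality, that is, exactly when $X$ lies on the hyperplane defining $\policol\ell t{\ol\tau}$ or $\polirow\ell s{\ol\tau}$. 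Hence the integer points of $\crpollibre{\ol\tau}$ whose image escapes $\crpollibre\tau$ are exactly the integer points of $F^-_\ell(\la,\mu,\ol\tau)$. Symmetrically, Lemma~\ref{lema:cara-mas} shows that an integer point $Y\in\crpollibre\tau$ lies in $\Phi_\ell(\crpollibre{\ol\tau})$ iff $x_{\ell-1}\ge1$, $\multient x\ell\ell2\ge1$, and~\eqref{lema:caras de columnas a sumar},~\eqref{lema:caras de renglones a sumar} hold; the failure of these conditions places $Y$ on $\posiuno{\ell-1}\tau$, on $\posidos\ell\ell2\tau$, or on one of the hyperplanes $\policol{\ell-1}t\tau$, $\polirow{\ell-1}s\tau$, i.e.\ exactly in $F^+_\ell(\la,\mu,\tau)$.

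With these identifications, I would argue that $\Phi_\ell$ restricts to a bijection between the integer points of $\crpollibre{\ol\tau}\setminus F^-_\ell(\la,\mu,\ol\tau)$ and those of $\crpollibre\tau\setminus F^+_\ell(\la,\mu,\tau)$: any integer point $X$ off $F^-$ maps into $\crpollibre\tau$ and, being visibly in $\Phi_\ell(\crpollibre{\ol\tau})$, avoids $F^+$; conversely any integer point $Y$ off $F^+$ has integral preimage $Y-Z_\ell$ in $\crpollibre{\ol\tau}$, which then avoids $F^-$. Because every face in $F^+_\ell$ and $F^-_\ell$ is a subset of its ambient polytope, the integer points of each polytope split as a disjoint union of those on the face-union and those off it; counting gives $\#\crpollibre{\ol\tau}=\#(\text{off }F^-)+\#F^-_\ell(\la,\mu,\ol\tau)$ and likewise for $\tau$. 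Equating the two ``off'' counts via the bijection and subtracting yields~\eqref{ecua:cancelacion geometrica}.

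I expect the only genuine work to be the bookkeeping in the second paragraph: matching, term by term, the strengthened inequalities of Lemmas~\ref{lema:cara-menos} and~\ref{lema:cara-mas} with the hyperplanes defining $\policol jt{}$ and $\polirow is{}$, and correctly handling the degenerate ranges of $t$ and $s$ together with the special case $\ell=p=q$, so that no face in the unions is miscounted or omitted. Everything else follows routinely from $\Phi_\ell$ being an integrality-preserving affine isomorphism.
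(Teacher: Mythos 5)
Your proposal is correct and takes essentially the same approach as the paper: the paper's proof is precisely the observation that the integrality-preserving affine isomorphism $\Phi_\ell$ matches up all integer points except those counted by Lemma~\ref{lema:cara-menos} (the points of $\crpollibre{\ol\tau}$ whose image escapes, giving $\# F^-_\ell(\la,\mu,\ol\tau)$) and Lemma~\ref{lema:cara-mas} (the points of $\crpollibre\tau$ outside the image, giving $\# F^+_\ell(\la,\mu,\tau)$), the rest cancelling. Your explicit bookkeeping step---that for integer points the failure of an inequality strengthened by $1$ is equivalent to equality in the original defining inequality, hence to lying on the corresponding face---is exactly the translation the paper leaves implicit in its definitions of $F^+_\ell$ and $F^-_\ell$.
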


From Theorem~\ref{teor:kron-alternante}, identity~\eqref{ecua:kron-con-politopos}
and Proposition~\ref{prop:cancelacion geometrica} we get the following result.

\begin{teor} \label{teor:kron-eficiente}
Let $\la$, $\mu$, $\nu$ be partitions of an integer $n$ and lengths $p$, $q$, $r$,
respectively.
Then, for each $\ell \in \nume p$ one has
\begin{equation} \label{ecua:crdif-pq}
\coefi = \sum_\tau \pm \left[ \# F^+_\ell(\la, \mu, \tau) - \# F^-_\ell(\la, \mu, \ol\tau) \right],
\end{equation}
and $\tau$ runs as in~\eqref{ecua:kron-con-politopos}.
Moreover, each of the summands above involves counting integer points in unions of faces whose
dimension does not exceed
\begin{equation*}
\textstyle
pqr - \binom{p}{2} - \binom {q}{2} - (p+q+r) + 1.
\end{equation*}
\end{teor}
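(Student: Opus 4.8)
The plan is to treat the identity~\eqref{ecua:crdif-pq} and the dimension bound separately, since the first is a formal assembly of facts already in hand and the second is where the work lies. For the identity I would simply combine two established results. Identity~\eqref{ecua:kron-con-politopos} expresses $\coefi$ as $\sum_\tau \pm[\#\crpollibre\tau - \#\crpollibre{\ol\tau}]$, the sum running over the pairs of compositions $\tau=(a,b,\rho_1,\dots,\rho_s)$, $\ol\tau=(a+1,b-1,\rho_1,\dots,\rho_s)$ coming from the cofactor expansion of the Jacobi--Trudi determinant of $s_\nu$. Proposition~\ref{prop:cancelacion geometrica} rewrites each bracketed difference, for an arbitrary but fixed $\ell\in\nume p$, as $\#F^+_\ell(\la,\mu,\tau)-\#F^-_\ell(\la,\mu,\ol\tau)$. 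Substituting the second expression into the first gives~\eqref{ecua:crdif-pq}; since $\ell$ was arbitrary the formula holds for every $\ell\in\nume p$, and no computation beyond this substitution is required.

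For the dimension bound the key remark is that each set occurring in the unions $F^+_\ell(\la,\mu,\tau)$ and $F^-_\ell(\la,\mu,\ol\tau)$ is a face of one column-row polytope cut out by a single linear equality: a vanishing condition $\posiuno{\ell-1}{\tau}$ or $\posidos{\ell}{\ell}{2}{\tau}$, the hyperplane of one column inequality $\colineq{\ell-1}{t}$ giving $\policol{\ell-1}{t}{\tau}$, or the hyperplane of one row inequality $\rowineq{\ell-1}{s}$ giving $\polirow{\ell-1}{s}{\tau}$ (and the analogous faces of $\crpollibre{\ol\tau}$ for $F^-_\ell$). Since the dimension of a finite union is the maximum of the dimensions of its members, it suffices to bound a single such face. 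A proper face obtained by intersecting a polytope with one hyperplane has dimension at most one less than the polytope, so Theorem~\ref{teor:dim-crpolytope} yields the bound $e-1 = pqr-\binom p2-\binom q2-(p+q+r)+1$.

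Two points require care, and the second is the real obstacle. First, the compositions $\tau$, $\ol\tau$ need not all have length $r=\longi\nu$; when $\longi\tau<r$ (for example $\ol\tau$ when $b=1$), Theorem~\ref{teor:dim-crpolytope} already bounds $\dim\crpollibre\tau$ by $pq\longi\tau-\binom p2-\binom q2-(p+q+\longi\tau)+2$, and since $pq\ge4$ by~\eqref{ecua:desigualdades-p-q-r} a one-line estimate gives $(e-1)-\bigl(pq\longi\tau-\binom p2-\binom q2-(p+q+\longi\tau)+2\bigr)=(pq-1)(r-\longi\tau)-1\ge pq-2\ge 2$, so such polytopes and all their faces lie well within the bound; the tight case is $\longi\tau=r$, where the polytope can actually reach dimension $e$ and the drop by one hyperplane is exactly what we need. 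Second, I would have to verify that each of these faces is genuinely \emph{proper}, i.e.\ that the defining hyperplane does not contain the whole affine hull of the polytope, for otherwise the face would coincide with the polytope and could have dimension $e$. I expect to settle this by exhibiting points of the polytope at which the relevant constraint is strict: the parameter $x_{\ell-1}$ of $\nivel1$ and the entry $\multient{x}{\ell}{\ell}{2}$ are not forced to vanish by condition~(C1) of Proposition~\ref{prop:column-conditions} when $\ell\le p$, while the inequalities $\colineq{\ell-1}{t}$ and $\rowineq{\ell-1}{s}$ hold strictly at the points of the full-dimensional hypercube constructed in the proof of Proposition~\ref{prop:dim-crcone}. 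In any residual degenerate case, where such a constraint happens to be forced to hold with equality throughout, the polytope is thereby already of dimension at most $e-1$, so the bound still holds.
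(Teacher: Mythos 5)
Your proposal is correct and takes essentially the same route as the paper: the identity~\eqref{ecua:crdif-pq} is obtained exactly as you do, by substituting Proposition~\ref{prop:cancelacion geometrica} into~\eqref{ecua:kron-con-politopos}, and the dimension bound is the paper's implicit argument that each set in $F^+_\ell(\la,\mu,\tau)$ and $F^-_\ell(\la,\mu,\ol\tau)$ is a face of a column-row polytope, hence of dimension at least one less than the bound of Theorem~\ref{teor:dim-crpolytope}. Your extra care about compositions of length less than $r$ and about properness of the faces addresses points the paper's one-sentence proof simply asserts rather than argues, so it is a refinement of, not a departure from, the paper's approach.
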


\begin{obses}
(1) The significance of Theorem~\ref{teor:kron-eficiente} is that the sets on the right-hand side
of~\eqref{ecua:cancelacion geometrica} have dimension one less than the dimension of
the polytopes on the left-hand side.
Also, each face in $F^+_\ell(\la, \mu, \tau)$ or in $F^-_\ell(\la, \mu, \ol\tau)$ requires
fewer equations for its definition than the number required in the definition of
a column-row polytope.
However, in each union of faces one has to take into account the intersections of
these faces.

(2) It follows from (1) that the complexity of the computation of the right-hand side
of~\eqref{ecua:crdif-pq} is lesser than the complexity of the computation of the right-hand
side of~\eqref{ecua:kron-alternante}.

(3) As we noted in the introduction, when $r=2$ the formula in Theorem~\ref{teor:kron-eficiente}
permits one to compute Kronecker coefficients by counting integer points in polytopes whose dimension
does not exceed~\eqref{ecua:dimension balver}.
We expect to obtain an analogous formula for the case $r=3$.

(4) There is something mysterious in the column-row polytopes in that one can compute
the same Kronecker coefficient using different faces of the same polytopes depending on
the value of $\ell$.
\end{obses}

\section{Proofs of lemmas in Section~\ref{sec:insertion of words}} \label{sec:proofs}

\begin{proof}[Proof of Lemma~\ref{lema:cano-palabra}]
One implication is straightforward: just perform column insertion of $w$.
For the other, assume $P(w)= \cano\la$.
Let $ v = w_{\rm col}(\cano\la)$ the column word of $\cano\la$.
Then $v$ is a reverse lattice word and $P(v) = \cano\la$.
Thus $v$ and $w$ are Knuth equivalent.
Then Lemma~2 in page 66 from~\cite{ful} implies that $w$ is a reverse lattice word.
\end{proof}

\begin{proof}[Proof of Main Lemma~\ref{lema:cano-cano}]
Let
\begin{equation*}
\left(
\begin{matrix}
u_1 & \cdots & u_r \\
v_1 & \cdots & v_r
\end{matrix}
\right)
\end{equation*}
be the lexicographic two-row array associated to $B$.
Then (\cite[p.~198]{ful})
\begin{equation*}
P = P(\palabra v1r) = v_1 \rightarrow \cdots \rightarrow v_r \rightarrow \vacio.
\end{equation*}
By Lemma~\ref{lema:cano-palabra}, $P = \cano \mu$ if and only if $\palabra vr1$
is a lattice word.
This is equivalent to conditions (1.i) and (1.ii).
Thus (1) follows.
Let
\begin{equation*}
\left(
\begin{matrix}
x_1 & \cdots & x_r \\
y_1 & \cdots & y_r
\end{matrix}
\right)
\end{equation*}
be the lexicographic two-row array associated to the transpose matrix $\transpuesta B$.
Then, by the Symmetry Theorem for the RSK correspondence (\cite[p.~40]{ful}),
\begin{equation*}
Q = P(\palabra y1r) = y_1 \rightarrow \cdots \rightarrow  y_r \rightarrow \vacio.
\end{equation*}
By Lemma~\ref{lema:cano-palabra}, $Q = \cano \la$ if and only if $\palabra yr1$
is a lattice word.
This is equivalent to conditions (2.i) and (2.ii).
Thus (2) follows.
\end{proof}

\begin{proof}[Proof of Corollary~\ref{coro:cano-cano}]
The implication $(\implicarev)$ is straightforward from Lemma~\ref{lema:cano-cano}.
We prove $(\implica)$.
Due to the symmetry of the RSK correspondence  we may assume that $p \le q$.
Part (i) follows directly from Lemma~\ref{lema:cano-cano}.
For (ii) we observe that from Lemma~\ref{lema:cano-cano}, part (1.ii), one has
\begin{equation*}
b_{p,1} \ge b_{2,p-1} \ge \cdots \ge b_{1,p}.
\end{equation*}
Similarly, from Lemma~\ref{lema:cano-cano}, part (2,ii), and part (i), already proved, one has
\begin{equation*}
b_{1,p} \ge b_{2,p-1} \ge \cdots \ge b_{p,1}.
\end{equation*}
Therefore, $b_{p,1} = b_{2,p-1} = \cdots = b_{1,p}$.
Next, we consider the inequalities from parts (1) and (2) of Lemma~\ref{lema:cano-cano}
that involve summations with two terms on each side.
Since the entries $b_{i,j}$ in the diagonal $i+j=p+1$ are all equal, we conclude that
the entries $b_{i,j}$ in the diagonal $i+j=p$ are all equal.
We proceed in this way until we prove that $b_{1,2} = b_{2,1}$.
\end{proof}

\vskip 2pc
\textbf{Acknowledgments}

\bigskip
The first author would like to Aram Bingham for many fruitful conversations.

\vskip 2.5pc
{\small

}

\end{document}